\journal{Journal of Multivariate Analysis}
\theoremstyle{plain}
\newtheorem{theorem}{Theorem}
\newtheorem{lemma}{Lemma}
\newtheorem{corollary}{Corollary}
\theoremstyle{definition}
\newtheorem{remark}{Remark}
\newcommand{\be}{\begin{eqnarray}}
\newcommand{\ee}{\end{eqnarray}}
\newcommand{\bea}{\begin{eqnarray*}}
\newcommand{\eea}{\end{eqnarray*}}
\newcommand{\ND}{\textcolor{black}}
\DeclareMathOperator*{\argmax}{arg\,max}
\newcommand{\cond}{\stackrel{\mathcal{D}}{\to}}
\newcommand{\conp}{\stackrel{\textnormal{Pr}}{\to}}
\newcommand{\tr}{\operatorname{tr}}
\newcommand{\inv}{^{-1}}
\newcommand{\PR}{\textnormal{Pr}}
\newcommand{\bfx}{\mathbf{x}}
\newcommand{\bfy}{\mathbf{y}}
\newcommand{\iid}{\stackrel{\textnormal{i.i.d.}}{\sim}}
\newcommand{\bfSigma}{\mathbf{\Sigma}}
\newcommand{\bfX}{\mathbf{X}}
\newcommand{\diag}{\operatorname{diag}}
\newcommand{\bfI}{\mathbf{I}}
\newcommand{\lb}{\left(}
\newcommand{\rb}{\right)}
\newcommand{\N}{\mathbb{N}}
\newcommand{\R}{\mathbb{R}}
\newcommand{\E}{\mathbb{E}}
\newcommand{\bfb}{\mathbf{b}}
\newcommand{\bfP}{\mathbf{P}}
\begin{document}

\begin{frontmatter}

\title{Likelihood ratio tests under model misspecification in high dimensions}

\author[1]{Nina D\"ornemann \corref{mycorrespondingauthor}}

\address[1]{Fakultät für Mathematik, Ruhr-Universität Bochum, 44801 Bochum, Deutschland}

\cortext[mycorrespondingauthor]{Corresponding author. Email address: \url{nina.doernemann@rub.de}}

\begin{abstract}
We investigate the likelihood ratio test for a large block-diagonal covariance matrix with an increasing number of blocks under the null hypothesis. While so far the likelihood ratio statistic has only been studied for normal populations, we establish that its asymptotic behavior is invariant under a much larger class of distributions. This implies
robustness against model misspecification, which is common in high-dimensional regimes. Demonstrating the flexibility of our approach, we additionally establish asymptotic normality of the log-likelihood ratio test statistic for the equality of many large sample covariance matrices under model uncertainty. For this statistic, a subtle adjustment to the centering term is needed compared to normal case.
A simulation
study and an analysis of a data set from psychology emphasize the usefulness of our findings.
\end{abstract}

\begin{keyword} 
Block-diagonal covariance matrix \sep
Central limit theorem \sep  
Equality of covariance matrices \sep
High-dimensional inference \sep 
Likelihood ratio test \sep 
Model misspecification \sep
Non-normal population.
\MSC[2020] Primary 62H15 \sep
Secondary 62H10
\end{keyword}

\end{frontmatter}

\section{Introduction} \label{sec_intro}
Over the last decades, the availability of high-dimensional data sets across diverse disciplines such as biostatistics, wireless communications and finance has transformed statistical practice 
(see, e.g., \cite{Fan2006, Johnstone2006} and references therein). Traditional multivariate analysis, as outlined in the text book \cite{anderson2003, muirhead1982}, is developed under the paradigm that the dimension is
negligible compared to the sample size and breaks down seriously if this assumption is violated. Such problems have spurred the development of new analysis tools, that work for dimensions of the same order as and even larger than the sample size. The literature on these topics is so large, that we can only cite a few illustrative examples, related to the present work: The works \cite{yamadaetal2017, bodnar2019} 
address whether a large covariance matrix admits a block-diagonal structure. Tests for independence in various setting are discussed in \cite{han2017distribution, loubaton2020large}. 
In the work \cite{hu2017testing}, Hu et al. concentrate on tests for the equality of high-dimensional mean vectors, while \cite{he2021asymptotically} take a broader perspective on high-dimensional testing by investigating a class of $U$-statistics. 

Turning closer to the scope of this work, the likelihood ratio method has received much attention in the literature on high-dimensional statistical inference since the past decade.
The starting point for the investigating of various classical testing problems transferred to a high-dimensional setting can be seen in the work of \cite{jiang_yang_2013} establishing CLTs for the corresponding log-likelihood ratio tests, including the two main testing problems investigated in this work. The authors of \cite{jiang2015} tried to relax the assumptions on the parameters, while other authors extended these results in various directions. For example, \cite{jiang2017moderate} proved a moderate deviation principle for these likelihood ratio tests.
More recent generalizations include the works of \cite{qi_et_al_2019, dette2020, guo2021}. 
All of these works rely on normally distributed data, and the asymptotic behavior of these test statistics under model misspecification has received little attention in the literature on high-dimensional statistics so far. A few works investigating likelihood ratio tests in different settings under model uncertainty include \cite{luo2012proportional, lemonte2013gradient, lemonte2016gradient, strug2018evidential, ishii2021statistical}. 
We add to this line of literature by dropping the restrictive distributional assumption of normality. In particular, we find that the CLTs for the log-likelihood of two specific testing problems remain still valid when only assuming moments of order $(4+\delta)$ for some $\delta>0$. Besides the theoretical importance of our findings, these results ensure more robust statistical guarantees for practitioners, as the validity of the normal assumption is not a priori clear for high-dimensional data sets. 

Interestingly, our results reveal that the limiting null distribution of the log-likelihood for testing for a block-diagonal structure does not depend on specific characteristics of the underlying data generating distribution, such as the fourth moment. 
 This observation is illustrated in Fig. \ref{fig_intro} 
 where we consider the problem of testing whether the covariance matrix of  a $p$-dimensional
random vector admits a block-diagonal structure with $q$ blocks.
Here, we display three histograms for the corresponding log-likelihood ratio test statistic
 under the null hypothesis based on three different distributions for the samples. The components of all vectors are independent identically distributed with respect to a standard normal distribution (left column), standardized t-distribution (middle column) and centered exponential distribution (right column), respectively. Thus, the null hypothesis of a block-diagonal covariance matrix is obviously satisfied (in this case, the covariance matrix equals the identity matrix). We observe that the histograms look very similar. This testing problem will be examined in detail in Section \ref{sec_test_uncor} of this work.  
  \begin{figure}[!ht]
      \includegraphics[width=0.329\columnwidth]{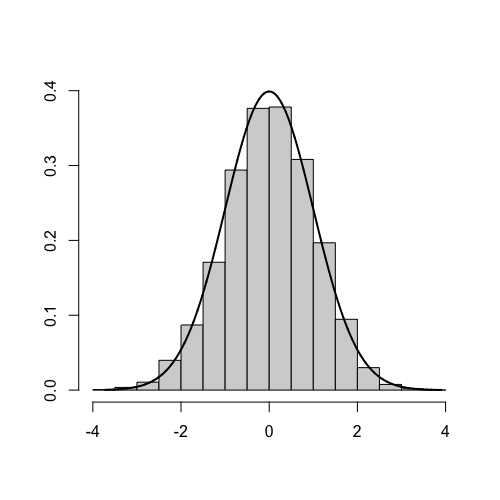}
 \includegraphics[width=0.329\columnwidth]{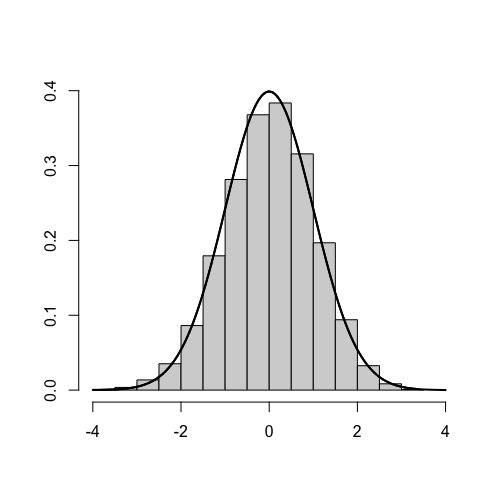}  
 \includegraphics[width=0.329\columnwidth]{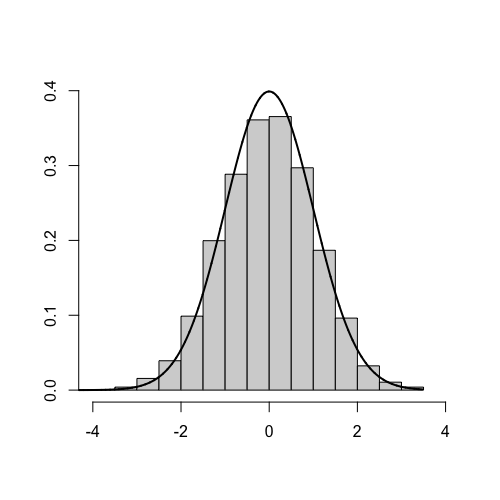}  
    
    \caption{ Histograms for the log-likelihood ratio test statistic \eqref{def_statistic} under the hypothesis \eqref{null} of a block diagonal structure of $q=30$ blocks
    of equal size $2$  in a $p=60$-dimensional
 random vector (sample size  $100$, simulation runs $10,000$). Left column: standard normally distributed data, middle column: standardized t-distributed data with $15$ degrees of freedom, right column: centered exponentially distributed data with parameter $1$. The gray curve indicates the density of the standard normal distribution. 
       }\label{fig_intro}
\end{figure} 
\\ Before concluding this introduction, we would like to discuss the main ideas for our proofs. Under the normal assumption, the exact distribution of the test statistic is available under the null hypothesis, on which proofs of previous works crucially depend on. Equipped with such a knowledge, the moment-generating of the log-likelihood test statistic is investigated \citep[see, e.g.,][]{jiang_yang_2013, qi_et_al_2019, guo2021} or a general central limit theorem is applied  \citep[see][]{dette2020}. Obviously, we cannot hope for an analogue exact representation without knowing the underlying distribution of the data. 

In order to tackle the difficulties arising in the proof for non-normal populations, we 
 derive a novel representation of the log-likelihood test statistic involving random quadratic forms without imposing restrictive distributional assumptions. For this purpose, we perform a QR-decomposition for the (sub)data matrices. Such QR-decompositions are useful in a broader context: The authors of \cite{wang2018} used this tool in order to derive the logarithmic law of the sample covariance matrix for the case $p/n \to 1$ near singularity, while \cite{heiny2021} investigated the log-determinant of the sample correlation matrix under infinite fourth moment. These papers were partially inspired by works of  \cite{nguyen_vu, bao2015}, in which the authors proved Girko's logarithmic law for a general random matrix with independent entries and brought his “method of perpendiculars” \citep[see][]{girko1998refinement} to a mathematically rigorous level. 
 Via our representation, we are in the position to decompose the test statistic into three parts: we will prove that the dominating linear term satisfies a central limit theorem for martingale difference schemes, while the quadratic term converges to constant and the remainders are asymptotically negligible.
 Heuristically, this decomposition can be motivated by Taylor's expansion $\log(1+x) = x - x^2/2 + \mathcal{O}(x^3)$, though one needs more delicate arguments in order to justify this step mathematically correct.  
 
 This work is structured as follows. In Section \ref{sec_test_uncor}, we present a CLT for the log-likelihood ratio test of a block-diagonal covariance matrix under the null hypothesis. Here, the number of blocks may increase together with the dimension of the data and sample size while we do not assume that the data is generated by a normal distribution. As a corollary, the distribution of a test for a diagonal covariance matrix is derived. 
 In Section \ref{sec_test_eq_cov}, we apply our method to another classical likelihood ratio test and provide the asymptotic distribution of the log-likelihood ratio test on the equality of many large covariance matrices. The main results of Section \ref{sec_test_uncor} and \ref{sec_test_eq_cov} are proven in Section \ref{sec_proofs}.
 We illustrate our findings with a simulation study, including a comparison to other criteria, and a real data analysis in Section \ref{sec_sim}. 
 
\section{Testing for a block-diagonal covariance matrix} \label{sec_test_uncor}

In the main part of this work, we revisit a very prominent problem in high-dimensional data analysis, namely a test for uncorrelation of  sub-vectors of a multivariate
 distribution. For normally distributed data, this coincides with a test for independence of these sub-vectors. 
 To be precise, let  $\mathbf{y} = \bfSigma^{\frac{1}{2}}\mathbf{x}$ denote a  $p$-dimensional random vector with mean
 $\boldsymbol\mu = \mathbf{0}\in\mathbb{R}^{p}$  and covariance matrix  $\mathbf{\Sigma}\in\mathbb{R}^{p \times p}$, where $\bfSigma^{\frac{1}{2}}$ denotes the symmetric square root of $\bfSigma$. We assume that  $\mathbf{y}$ is decomposed
 as
	\begin{align} \label{notation}
		\mathbf{y} = \big (  {\mathbf{y}^{(1)}}^{\top}	, 	\ldots ,
		{\mathbf{y}^{(q)}}^{\top}
		\big ) ^{\top},
		\end{align}	
		where $\mathbf{y}^{(i)}$ are vectors of dimension  $p_i\in\N$, $1 \leq i \leq q$, such that $\sum_{i=1}^{q} p_{i}=p$ for some
		 integer $q\geq 2$. Moreover, we assume that the components of $\bfx$ are i.i.d. with respect to some centered and standardized distribution.
		Let
		\begin{align} \label{22}
		\mathbf{\Sigma} = \begin{pmatrix}
			\mathbf{\Sigma}_{11} & \mathbf{\Sigma}_{12} & \ldots  & \mathbf{\Sigma}_{1q} \\
			\mathbf{\Sigma}_{21} & \mathbf{\Sigma}_{22}& \ldots  & \mathbf{\Sigma}_{2q} \\
			\vdots & \vdots & \ddots & \vdots\\
			\mathbf{\Sigma}_{q 1} & \mathbf{\Sigma}_{q 2} & \ldots  & \mathbf{\Sigma}_{q q} 
		\end{pmatrix}
	\end{align}	
		denote the corresponding decomposition of the covariance matrix, where $\mathbf{\Sigma}_{ij} := \operatorname{Cov}(\mathbf{y}^{(i)},\mathbf{y}^{(j)})$. The hypothesis of uncorrelated sub-vectors is formulated as
		\begin{align} \label{null}
		H_0: ~ \mathbf{\Sigma}_{ij}  =\mathbf{0} \textnormal{~~for all~} i\neq j.
	\end{align}	
Let $\bfx_1, \ldots, \bfx_n \iid \bfx$ be a sample  of independent identically distributed random variables according to $\bfx$ and denote $\bfy_k = \bfSigma^{1/2} \bfx_k$ for $1\leq k \leq n$. 
Under the normal assumption $\bfx \sim \mathcal{N} ( \mathbf{0}, \bfI )$,
 the likelihood ratio test is given by 
	\begin{align} \label{def_statistic}
		\Lambda_n = \frac{|\mathbf{\hat \Sigma}|^{n/2}}{\prod\limits_{i=1}^{q} |\mathbf{\hat \Sigma}_{ii} |^{n/2}} = V_n^{\frac{n}{2}}, ~
		\mathbf{\hat \Sigma} =\frac{1}{n}\sum\limits_{k=1}^n \mathbf{y}_k 
		\mathbf{y}_k^\top,
	\end{align}
where $\hat \bfSigma$ denotes the sample covariance matrix of $\bfy_1, \ldots, \bfy_n$ and
and $ \mathbf{\hat \Sigma}_{{ij} }$ denotes the block in the $i$th row and $j$th column of  the estimate $\mathbf{ \hat \Sigma}$ corresponding to
the decomposition \eqref{22}. \\
The authors of \cite{jiang_yang_2013, jiang2015} derived a central limit theorem for the corresponding log-likelihood ratio test statistic assuming that $\bfy \sim \mathcal{N}(\boldsymbol\mu, \bfSigma)$ and that the number $q$ of blocks is fixed. Several authors such as \cite{qi_et_al_2019, dette2020} demonstrated that such a CLT still holds true if the parameter $q=q_n$ is allowed to increase with sample size and dimension. All of these works rely on normally distributed data. 
Dropping the normal assumption, the following theorem provides the asymptotic distribution of $\log \Lambda_n$ under the null hypothesis of uncorrelation without assuming a normal distribution for $\bfx_1, \ldots, \bfx_n \sim \bfx$. The proof is deferred to Section \ref{sec_proofs}. 
\begin{theorem} \label{thm}
Let the components of $\bfx$ be i.i.d. centered random variables following a continuous distribution with finite $(4+\delta)$th moment for some $\delta>0$. 	
Assume that $q =q_n \geq 2$ is a possibly increasing integer, $2 \leq p=p_n< n$ with  $0< \inf_{n\in\N} \min_{1 \leq i \leq q} ( p_i q)/n \leq \sup_{n\in\N} p/n < 1$
 and $\max_{1 \leq i \leq q} p_i \leq \eta p$ for each $n\in\N$ and some fixed $\eta \in (0,1)$.  Then, it holds under the null hypothesis in \eqref{null} that
	\begin{align*}
		\frac{ \log V_n - \mu_n }{\sigma_n} \cond \mathcal{N}(0,1),
			\end{align*}
			where 
			\begin{align*}
				\mu_n & = \sum\limits_{i=1}^q \lb n - p_i - \frac{1}{2} \rb \log \lb 1 - \frac{p_i}{n} \rb  - \lb n - p - \frac{1}{2} \rb \log \lb 1 - \frac{p}{n} \rb 
				, ~ \sigma_n^2  = 2 \left\{ \sum\limits_{i=1}^q \log \lb 1 - \frac{p_i}{n} \rb 
				- \log \lb 1 - \frac{p}{n} \rb \right\} 
				.
			\end{align*}
\end{theorem}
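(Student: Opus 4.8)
The plan is to reduce to the null value $\bfSigma=\bfI$, to rewrite $\log V_n$ through a perpendicular (QR) decomposition as a sum of row-wise log-ratios of quadratic forms, and then to run a martingale central limit theorem on the resulting array.

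First I would exploit invariance: for any block-diagonal invertible $\mathbf C=\diag(\mathbf C_1,\dots,\mathbf C_q)$ the statistic $V_n$ is unchanged under $\bfy_k\mapsto\mathbf C\bfy_k$, since $|\mathbf C|^2=\prod_i|\mathbf C_i|^2$. Under $H_0$ the matrix $\bfSigma$ is block-diagonal, so $\bfSigma^{1/2}$ is as well and the choice $\mathbf C=\bfSigma^{-1/2}$ turns the data into $\bfx_k$ while leaving $V_n$ unchanged; hence I may assume $\bfSigma=\bfI$, so that $\hat\bfSigma=\tfrac1n\bfX\bfX^\top$ with $\bfX=(\bfx_1,\dots,\bfx_n)\in\R^{p\times n}$. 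Ordering the rows $\rd_1,\dots,\rd_p\in\R^n$ of $\bfX$ block by block, let $\bfP_{j-1}$ be the orthogonal projection onto $\operatorname{span}\{\rd_1,\dots,\rd_{j-1}\}$ and $\bfP_j^{\circ}$ the projection onto the rows preceding $\rd_j$ inside its own block. The Gram/volume identity $|\bfX\bfX^\top|=\prod_{j=1}^p\rd_j^\top(\bfI-\bfP_{j-1})\rd_j$, applied to the whole matrix and to each diagonal block (the powers of $n$ cancel because $\sum_ip_i=p$), gives
\[
\log V_n=\sum_{j=1}^p\Big[\log\big(\rd_j^\top(\bfI-\bfP_{j-1})\rd_j\big)-\log\big(\rd_j^\top(\bfI-\bfP_j^{\circ})\rd_j\big)\Big].
\]
Because the distribution is continuous and $p<n$, the rows are a.s.\ linearly independent, so $\bfP_{j-1},\bfP_j^{\circ}$ have the deterministic ranks $j-1$ and $k-1$ (with $k$ the within-block index of $j$) and $\bfP_j^{\circ}\le\bfP_{j-1}$.

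Next I would set up the martingale. With $\mathcal F_j=\sigma(\rd_1,\dots,\rd_j)$ both projections are $\mathcal F_{j-1}$-measurable, and writing $a_j=n-j+1=\tr(\bfI-\bfP_{j-1})$, $b_j=n-k+1=\tr(\bfI-\bfP_j^{\circ})$ together with $\rd_j^\top(\bfI-\bfP_{j-1})\rd_j=a_j(1+Y_j)$ and $\rd_j^\top(\bfI-\bfP_j^{\circ})\rd_j=b_j(1+Z_j)$, the unit variance of the entries yields $\E[Y_j\mid\mathcal F_{j-1}]=\E[Z_j\mid\mathcal F_{j-1}]=0$. Using $\log(1+x)=x-\tfrac12x^2+O(x^3)$ and separating each summand into its $\mathcal F_{j-1}$-conditional mean and a martingale difference $D_j$, I obtain
\[
\log V_n=\sum_{j=1}^p\log\frac{a_j}{b_j}+\sum_{j=1}^p\E\big[\log(1+Y_j)-\log(1+Z_j)\,\big|\,\mathcal F_{j-1}\big]+\sum_{j=1}^pD_j ,
\]
where $D_j=Y_j-Z_j$ plus negligible higher-order terms. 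The first two sums form the predictable centering and the last is the martingale to be normalized. The key computations then identify $\mu_n,\sigma_n^2$ and, crucially, show that the fourth moment drops out. From $\Var(\rd^\top\mathbf A\rd\mid\mathcal F_{j-1})=\kappa\sum_l A_{ll}^2+2\tr(\mathbf A^2)$ with $\kappa=\E x_{11}^4-3$, one gets $\E[Y_j^2\mid\mathcal F_{j-1}]=2/a_j+\kappa/n+o(\cdot)$ and the same with $b_j$ for $Z_j$, so $\E[\log(1+Y_j)-\log(1+Z_j)\mid\mathcal F_{j-1}]=-1/a_j+1/b_j+o(\cdot)$ with the $\kappa$-terms cancelling between numerator and denominator; summing $\log(a_j/b_j)-1/a_j+1/b_j$ over all rows and applying Stirling's formula reproduces $\mu_n$. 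For the variance, with $\mathbf B_j=a_j^{-1}(\bfI-\bfP_{j-1})-b_j^{-1}(\bfI-\bfP_j^{\circ})$ the nesting $\bfP_j^{\circ}\le\bfP_{j-1}$ forces $(\bfI-\bfP_{j-1})(\bfI-\bfP_j^{\circ})=\bfI-\bfP_{j-1}$, whence $2\tr(\mathbf B_j^2)=2(1/a_j-1/b_j)$ is deterministic and sums to $\sigma_n^2$; the residual $\kappa\sum_l(\mathbf B_j)_{ll}^2$ is controlled by noting that coordinate exchangeability gives $\E(\bfP_{j-1})_{ll}=(j-1)/n$ exactly, hence $\E(\mathbf B_j)_{ll}=0$, reducing it to a sum of variances of projection diagonals of negligible order. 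Thus $\sum_j\Var(D_j\mid\mathcal F_{j-1})\conp\sigma_n^2$, and the martingale CLT applies, its Lyapunov condition supplied by the $(4+\delta)$th moment, which gives $(2+\delta/2)$ finite moments for $Y_j$ and $Z_j$.

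The main obstacle is precisely this vanishing of the excess-kurtosis contributions, which rests on quantitative concentration of the diagonal entries $(\bfP_{j-1})_{ll}$ and $(\bfP_j^{\circ})_{ll}$ of the nested random projections generated by non-Gaussian rows. For Gaussian data these diagonals are exact Beta variables, but in general one must combine the exact mean $\E(\bfP)_{ll}=\operatorname{rank}/n$ from exchangeability with variance bounds of order $n^{-1}$, uniformly along the filtration, so that both the kurtosis terms and the Taylor remainders $\sum_j(Y_j^2-\E[Y_j^2\mid\mathcal F_{j-1}])$ and $\sum_jO(|Y_j|^3)$ are asymptotically negligible. Making these estimates rigorous under only a $(4+\delta)$th moment — including the rare event that some $Y_j$ or $Z_j$ is not small — is where the real work lies.
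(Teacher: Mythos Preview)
Your proposal is correct and follows essentially the same route as the paper: reduce to $\bfSigma=\bfI$ under $H_0$, expand $\log V_n$ via the QR/perpendicular decomposition into a row-wise sum of log-quadratic forms, Taylor-expand, run the martingale CLT on the linear increments $X_i-X_{g(i),i}$ (your $Y_j-Z_j$), and show separately that the quadratic part furnishes the $\sigma_n^2/2$ correction while the cubic remainders vanish via the $(4+\delta)$-moment bound on centered quadratic forms. For the excess-kurtosis contribution $\kappa\sum_l(\mathbf B_j)_{ll}^2$---the obstacle you correctly isolate---the paper does not invoke $O(n^{-1})$ variance bounds on individual projection diagonals but only the weaker averaged concentration $n^{-1}\sum_l(p_{ll}-\gamma)^2\conp 0$ of Anatolyev and Yaskov (2017), lifted to the full sum by a subsequence/compactness argument and combined with Cauchy--Schwarz for the cross term; in fact $o(1)$ bounds on the diagonal fluctuations already suffice for your exchangeability-based computation, so the stronger $O(n^{-1})$ rate you mention is neither needed nor used.
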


\begin{remark} \label{rem_substitution} 
\begin{enumerate}[(i)]
    \item 
Choose $\alpha \in (0,1)$. We propose to reject the null hypothesis whenever
       \begin{align}
			 \log V_n \leq \sigma_{n} u_{\alpha} + \mu_n~, \label{test}
		\end{align}			        
        where $u_{\alpha}$ denotes the $\alpha$-quantile of the standard normal distribution. Thus, we have by Theorem \ref{thm}
        \begin{align*}
        		\lim\limits_{n\to\infty} \PR_{H_0} \lb \log V_n \leq \sigma_{n} u_{\alpha} + \mu_n \rb 
        		= \PR ( \mathcal{N}(0,1) \leq u_\alpha ) =\alpha,
        \end{align*}
        which means that the test keeps asymptotically its nominal level $\alpha$.
        \item 
        \ND{
        We would like to comment on the continuity assumption on the distribution of the random vector $\bfx$, which is needed in the proof of Theorem \ref{thm} in order to ensure formal correctness of the QR-decomposition. In general, one could replace the entries of $\mathbf{x}$ by some random vector $\overline{\bf{x}}$ following a continuous distribution without changing the asymptotic distribution of $\log V_n$. One choice for $\overline{\bfx}$ could be a mixture of the original distribution and a uniform distribution (see, e.g., Proposition 2.1 in \cite{wang2018} for the log-determinant of the sample covariance matrix).  
        If $q_n=q$ denotes a fixed integer, the continuity assumption is not necessary for proving Theorem \ref{thm}. 
        For an increasing parameter $q_n$, however, this would require a more restrictive regime for $q = q_n$, since the log-likelihood test statistics involves $q_n+1$ log-determinants of sample covariance matrices. 
        We do not pursue in this direction here.
        } 
        \end{enumerate} 
\end{remark}

As a noteworthy-by-product of Theorem \ref{thm}, we are able to construct a test for a diagonal covariance matrix based on the sample correlation matrix. 
For this purpose, we consider the special case of testing for a diagonal covariance matrix which coincides with complete independence of the $p$ components of $\bfx$ in the normal case. In this case, the test in \eqref{null} is equivalent to 
\begin{align}
\label{null_diag_cov}
H_0: \mathbf{R} = \mathbf{I}_p,
\end{align}
where $
\mathbf{R} = \diag(\bfSigma)^{-\frac{1}{2}} \bfSigma  \diag(\bfSigma)^{-\frac{1}{2}}
$ denotes the population correlation matrix of $\bfy$. Then, the statistic $V_n$ defined in \eqref{def_statistic} can be written as the determinant of the sample correlation matrix, that is,
\begin{align*}
	V_n = | \hat{\mathbf{R}} |,
\end{align*}
where 
$
	\hat{\mathbf{R}} = \diag(\hat{\bfSigma})^{-\frac{1}{2}} \hat{\bfSigma } \diag(\hat{\bfSigma})^{-\frac{1}{2}}
$
denotes the sample correlation matrix of $\bfy_1, \ldots, \bfy_n$. Several authors investigated tests for the hypothesis given in \eqref{null_diag_cov} in different frameworks (e.g., see \cite{jiang_yang_2013, jiang2015, gao2017, mestre2017correlation, qi_et_al_2019, parolya2021, heiny2021}).
We observe that testing for \eqref{null_diag_cov} is a special case of testing for \eqref{null} by letting $q=p$ and $p_1 = \cdots = p_q = 1$.
Then, Theorem \ref{thm} gives us the following result.
\begin{corollary} \label{cor}
Let the components of $\bfx$ be i.i.d. centered random variables following a continuous distribution with finite $(4+\delta)$th moment for some $\delta>0$. 
Assume that $2 \leq p=p_n< n$ and $0< \inf_{n\in\N} p/n \leq \sup_{n\in\N} p/n < 1$.  Then, it holds under the null hypothesis in \eqref{null_diag_cov} that,
	\begin{align*}
		\frac{ \log | \mathbf{R} | - \overline{\mu}_n }{\overline{\sigma}_n} \cond \mathcal{N}(0,1),
			\end{align*}
			where 
			\begin{align*}
				\overline{\mu}_n & = p \lb n -  \frac{3}{2} \rb \log \lb 1 - \frac{1}{n} \rb  - \lb n - p - \frac{1}{2} \rb \log \lb 1 - \frac{p}{n} \rb ,
				~ \overline{\sigma}_n^2  = 2 \left\{ p \log \lb 1 - \frac{1}{n} \rb 
				- \log \lb 1 - \frac{p}{n} \rb \right\} 
				. 
			\end{align*}
\end{corollary}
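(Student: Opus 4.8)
The plan is to derive Corollary~\ref{cor} as a direct specialization of Theorem~\ref{thm} to the choice $q=p$ and $p_1 = \cdots = p_q = 1$, so that the whole argument amounts to checking that the hypotheses carry over and that the centering and scaling constants collapse to the announced forms. First I would record the identification already observed in the preceding remark: when every block has size one, the block $\hat{\bfSigma}_{ii}$ is the scalar $i$th diagonal entry $\hat\sigma_{ii}$ of $\hat{\bfSigma}$, and therefore
\[
V_n = \frac{|\hat{\bfSigma}|}{\prod_{i=1}^{p} \hat\sigma_{ii}} = \big| \diag(\hat{\bfSigma})^{-1/2}\, \hat{\bfSigma}\, \diag(\hat{\bfSigma})^{-1/2} \big| = |\hat{\mathbf{R}}|,
\]
so that $\log V_n = \log |\hat{\mathbf{R}}|$ is exactly the statistic appearing in the corollary.

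Next I would verify that the assumptions of Theorem~\ref{thm} hold under the hypotheses imposed here. The $(4+\delta)$th moment condition on the components of $\bfx$ is identical. Setting $p_i = 1$ and $q = p$ gives $\min_{1\leq i \leq q}(p_i q)/n = p/n$, so the two-sided bound $0 < \inf_{n} p/n \leq \sup_{n} p/n < 1$ assumed in the corollary is precisely the one required by the theorem. The constraint $\max_{1 \leq i \leq q} p_i \leq \eta p$ reduces to $1 \leq \eta p$, which holds for every $n$ with the fixed choice $\eta = \tfrac12$ because $p \geq 2$; and $q = p \geq 2$ is immediate. Hence Theorem~\ref{thm} applies.

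Finally I would substitute $p_i = 1$ and $q = p$ into the constants of Theorem~\ref{thm}. The sum over blocks collapses: each of the $p$ summands of $\mu_n$ equals $\lb n - 1 - \tfrac12 \rb \log\lb 1 - \tfrac1n \rb$, producing the term $p\lb n - \tfrac32 \rb \log\lb 1 - \tfrac1n \rb$ of $\overline{\mu}_n$, while the subtracted term $\lb n - p - \tfrac12 \rb \log\lb 1 - \tfrac pn \rb$ is unchanged; likewise $\sum_{i=1}^{q} \log\lb 1 - p_i/n \rb = p \log\lb 1 - 1/n \rb$ yields $\overline{\sigma}_n^2$. The claimed convergence then follows at once from Theorem~\ref{thm}. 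Since this is a clean specialization, there is no genuine analytic difficulty; the only point demanding a moment's care is the bookkeeping confirming that the degenerate regime $p_i \equiv 1$, $q = p$ still lies inside the admissible range of Theorem~\ref{thm} — in particular, that letting the number of blocks $q = q_n$ grow as fast as $p$ is permitted, which it is since the theorem allows $q$ to increase with $n$.
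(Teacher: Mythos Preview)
Your proposal is correct and follows exactly the paper's own route: the corollary is obtained by specializing Theorem~\ref{thm} to $q=p$, $p_1=\cdots=p_q=1$, after noting that in this case $V_n=|\hat{\mathbf{R}}|$. The verification of the hypotheses and the substitution into $\mu_n$ and $\sigma_n^2$ are as you describe.
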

 Note that \cite{parolya2021} investigated the log-determinant of the sample correlation matrix in a more general context. In fact, they cover the case $\mathbf{R} \neq \mathbf{I}$, while Corollary \ref{cor} is formulated under the null hypothesis $\mathbf{R} = \mathbf{I}$. 
If we assume that $p/n \to \gamma \in (0,1)$, then Corollary \ref{cor} yields a special version of their Theorem 2.1, since
	\begin{align*}
		\overline{\mu}_n & = - \lb n - p - \frac{1}{2} \rb \log \lb 1 - \frac{p - 1 }{n} \rb 
		- ( p - 1) +  \frac{p}{n} + o(1), ~ 
		\overline{\sigma}_n^2  = - 2 \left\{ \frac{p}{n} - \log \lb 1 - \frac{p- 1 }{n} \rb \right\} + o(1),
	\end{align*}
which coincides with mean and variance given in their Theorem 2.1 in the case $\mathbf{R}= \mathbf{I}$. 
In a follow-up work, \cite{heiny2021} showed that the CLT for the sample correlation matrix in the case $\mathbf{R}=\mathbf{I}$ still holds true under infinite fourth moment. 
\ND{
Other authors were interested in the study of the spatial-sign covariance matrix, which includes the sample correlation matrix as a special case. For example, \cite{yang2021, li2022} proved CLTs for linear spectral statistics of this general class of random matrices.
}

	\section{Testing for equality of covariance matrices} \label{sec_test_eq_cov}
	We expect that our method for proving a CLT as given in Theorem \ref{thm} can be adapted to the investigation of other classical likelihood ratio tests in a non-normal setting.
	 In order to demonstrate this adaption, we consider in this section the comparison of $q$  centered distributions with covariance matrices
   $\mathbf{\Sigma}_{1} , \ldots , \mathbf{\Sigma}_{q}
   \in\R^{p\times p}$ and generic elements $\bfy_1 =\bfSigma_1^{1/2} \bfx, \ldots, \bfy_q = \bfSigma_q^{1/2} \bfx$.
    We assume that for each group $j$
   a sample of size $n_j$ is available, $j\in\{1,\ldots ,q\}$. When considering asymptotics, the dimension $p$ and the number $q$ of groups increase with the (sub)sample sizes.
   As before, we assume that the components of $\bfx$ are i.i.d. with respect to some centered and standardized distribution.

An important assumption for  multivariate analysis of variance (MANOVA)  is that  of  equal covariances in the different groups, which motivates our interest in testing the hypothesis
 	\begin{align} \label{null_eq_cov}
 		H_0: \mathbf{\Sigma}_1 = \cdots  = \mathbf{\Sigma}_q.
 	\end{align}
	This problem has been considered by several authors in the context of high-dimensional inference
	(see  \cite{obrien1992, schott2007, srivastava2010, jiang_yang_2013, jiang2015, dette2020, guo2021} among others).
	In this section, we add to this line of literature and
	investigate the asymptotic distribution of  the likelihood ratio test based on samples of independent distributed observations
	$\mathbf{y}_{jk} \iid  \bfy_j,$ $1\leq k \leq n_j$, $1\leq j \leq q$. To be precise,   let
$n = \sum_{j=1}^q n_j $ 		be the total sample size,
then the likelihood ratio test for the hypothesis \eqref{null_eq_cov} under the normal assumption $\bfx\sim\mathcal{N} (\mathbf{0}, \bfI)$
is  given by
	\begin{align} \label{def_statistic_2}
		\Lambda_{n,2} = \frac{\prod\limits_{j=1}^q |\mathbf{A}_j / n_j |^{\frac{1}{2} n_j }}{|\mathbf{A}/n|^{\frac{1}{2}n }},
	\end{align}
	where  the $p \times p$  matrices   $\mathbf{A}_{j}$ and $\mathbf{A}$ are defined as
	\begin{align*}
		\mathbf{A}_j &= \sum_{k=1}^{n_j} \mathbf{y}_{jk} \mathbf{y}_{jk}^\top  ~,~~
		\mathbf{A} = \sum_{j=1}^q \mathbf{A}_j. 
	\end{align*}	 
In the case $\bfy_j\sim\mathcal{N}(\boldsymbol\mu_j, \bfSigma_j)$, $1 \leq j \leq q$,	 \cite{jiang_yang_2013, jiang2015} proved asymptotic normality of the corresponding log-likelihood ratio test statistic under the null hypothesis if the number $q$ of groups is fixed. These results were generalized by \cite{dette2020, guo2021} for the case of an increasing number $q=q_n$ of groups. All of these works dealt only with normally distributed data, while the following result shows that a subtle adjustment to the centering term is necessary for non-normal data.
	In the following theorem, we provide the limiting distribution of $\log \Lambda_{n,2}$ under the null hypothesis without imposing a normal assumption on $\bfy_1, \ldots, \bfy_q$ in a high-dimensional setting, where the number of groups is allowed to increase. 
\begin{theorem} \label{thm_eq_cov}
Let the components of $\bfx$ be i.i.d. centered random variables following a continuous distribution with finite $(4+\delta)$th moment for some $\delta>0$. 
Assume that $q=q_n\geq2$ is a possibly increasing integer, $n_j =n_j(n) > p=p_n $ for every $n\in\N$ and $0< \inf_{n\in\N} p/n \leq \sup_{n\in\N} \max_{1 \leq j \leq q} p/n_j < 1$. 
Then it holds under the null hypothesis \eqref{null_eq_cov}
\begin{align*}
	\frac{2 \lb \log \Lambda_{n,2}  - \mu_n \rb }{n \sigma_n} \cond \mathcal{N}(0,1),
\end{align*}
where
\begin{align}
	\mu_n  & =  n \lb n - p - \frac{1}{2} \rb \log \lb 1 - \frac{p}{n} \rb 
	- \sum\limits_{j=1}^q n_j \lb n_j - p - \frac{1}{2} \rb \log \lb 1 - \frac{p}{n_j} \rb 
+ \frac{\E[x_{11}^4] -3}{2} p(1-q)
	, ~ \nonumber \\ 
	\sigma_n^2 & =  \log \lb 1 - \frac{p}{n} \rb  - \sum\limits_{j=1}^q \lb \frac{n_j}{n} \rb^2 \log \lb 1 - \frac{p}{n_j} \rb 
	. \label{def_sigma_eq_cov}
\end{align}

\end{theorem}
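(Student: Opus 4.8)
The plan is to adapt the QR-decomposition (``method of perpendiculars'') strategy behind Theorem \ref{thm} to the two-determinant structure of $\log \Lambda_{n,2}$. First I would exploit the invariance of $\Lambda_{n,2}$ under the common linear map $\bfy_{jk}\mapsto \bfSigma^{-1/2}\bfy_{jk}$: under $H_0$ each factor $|\mathbf A_j/n_j|^{n_j/2}$ contributes $|\bfSigma|^{-n_j/2}$ and $|\mathbf A/n|^{n/2}$ contributes $|\bfSigma|^{-n/2}$, and since $\sum_j n_j = n$ these cancel in the ratio, so I may assume $\bfSigma_1=\cdots=\bfSigma_q=\bfI_p$ and work directly with $\bfy_{jk}=\bfx_{jk}$. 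Writing $\mathbf A_j=\bfX_j^\top \bfX_j$ with $\bfX_j$ the $n_j\times p$ data matrix of group $j$, and $\mathbf A=\bfX^\top \bfX$ with $\bfX$ the vertical stack of the $\bfX_j$, one has $2\log\Lambda_{n,2}=\sum_{j=1}^q n_j\log|\mathbf A_j|-n\log|\mathbf A|+(\text{deterministic})$. The QR/perpendicular identity then gives $\log|\mathbf A|=\sum_{i=1}^p\log\big(\mathbf c_i^\top(\bfI_n-P_{i-1})\mathbf c_i\big)$, where $\mathbf c_i$ is the $i$th column of $\bfX$ and $P_{i-1}$ projects onto the span of columns $1,\dots,i-1$, and analogously $\log|\mathbf A_j|=\sum_{i=1}^p\log\big(\mathbf c_{j,i}^\top(\bfI_{n_j}-P_{j,i-1})\mathbf c_{j,i}\big)$. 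The key structural fact is that the full columns split as $\mathbf c_i=(\mathbf c_{1,i}^\top,\dots,\mathbf c_{q,i}^\top)^\top$, so the full-data and group-wise quadratic forms are driven by overlapping data.

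Next I would normalize each perpendicular as a rank times a centered form, $\mathbf c_{j,i}^\top(\bfI_{n_j}-P_{j,i-1})\mathbf c_{j,i}=(n_j-i+1)(1+\xi_{j,i})$ and $\mathbf c_i^\top(\bfI_n-P_{i-1})\mathbf c_i=(n-i+1)(1+\xi_i)$, where $\E[\xi_{j,i}\mid\mathcal F_{i-1}]=\E[\xi_i\mid\mathcal F_{i-1}]=0$ for the filtration $\mathcal F_i=\sigma(\mathbf c_{j,\ell}:1\le j\le q,\ 1\le\ell\le i)$ generated by the first $i$ features across all groups. The deterministic ranks $\log(n_j-i+1)$ and $\log(n-i+1)$ assemble, via Stirling, into the leading part of $\mu_n$. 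Applying $\log(1+x)=x-x^2/2+\mathcal O(x^3)$ splits the residual into a linear part $L_n$, a quadratic part $Q_n$, and a remainder $R_n$. Because $P_{i-1}$ and every $P_{j,i-1}$ are $\mathcal F_{i-1}$-measurable while the $i$th columns are independent of $\mathcal F_{i-1}$, the centered increments $D_i-\E[D_i\mid\mathcal F_{i-1}]$ with $D_i=\tfrac12\big(\sum_j n_j\log(1+\xi_{j,i})-n\log(1+\xi_i)\big)$ form a martingale difference scheme.

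For the quadratic and conditional-mean contributions I would use that for a projection $M$ and unit-variance i.i.d.\ entries $\Var(\mathbf c^\top M\mathbf c)=(\E\bfx_1^4-3)\sum_k M_{kk}^2+2\tr(M^2)$; since $\tr(M^2)=\tr(M)=\operatorname{rank}$ while $\sum_k M_{kk}^2$ is of smaller order, the fourth-moment term is asymptotically negligible, which is precisely the mechanism producing the universality asserted in the theorem. Hence $Q_n$ and the conditional means concentrate to deterministic quantities completing $\mu_n$, while the surviving conditional variance of $L_n$ matches $n^2\sigma_n^2/4$, the weights $(n_j/n)^2$ in \eqref{def_sigma_eq_cov} arising from the coefficients $n_j$ against the global normalization together with the conditional covariance between $\xi_i$ and the $\xi_{j,i}$ that share data. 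The remainder $R_n\conp 0$ follows from $(4+\delta)$-moment bounds on the cubic quadratic-form terms, and after verifying the conditional Lindeberg condition I would invoke the martingale central limit theorem.

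\emph{The main obstacle} I anticipate is the joint treatment of the full-data form $\xi_i$ and the group-wise forms $\xi_{j,i}$: computing $\operatorname{Cov}(\xi_i,\xi_{j,i}\mid\mathcal F_{i-1})$ and summing over $i$ so as to recover exactly $\sigma_n^2$, while controlling the $\mathcal F_{i-1}$-dependence of the conditional variances uniformly as $q=q_n\to\infty$ and $p/n_j$ approaches (but stays below) $1$ in the near-singular regime. Establishing that the fourth-moment contributions vanish in every one of these pieces is the crux of the robustness claim.
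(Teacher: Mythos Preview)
Your proposal is correct and follows essentially the same route as the paper: reduction to identity covariance under $H_0$, QR/perpendicular representation of $\log|\mathbf A|$ and each $\log|\mathbf A_j|$, centering to form $X_i$ and $X_{j,i}$ (your $\xi_i$, $\xi_{j,i}$), Stirling for the deterministic ranks, the Taylor split into linear, quadratic and remainder parts, and a martingale CLT along the column filtration $\mathcal F_i$. The paper organizes the linear part as $W_i=\sum_j n_j X_{j,i}-nX_i$ and computes the conditional covariance $\E[X_i X_{j,i}\mid\mathcal A_{i-1}]$ exactly as you anticipate, showing the $(\nu_4-3)$-contributions cancel via the same Hadamard-trace argument (Lemmas \ref{lem_conv_pii_sq} and \ref{lem_conv_sigma2i}) used for Theorem \ref{thm}; this is precisely the ``main obstacle'' you flag, and the resolution is identical in spirit to what you describe.
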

The proof is provided is Section \ref{sec_proofs}. Similarly to Remark \ref{rem_substitution}, an asymptotic level $\alpha$ test for the hypothesis \eqref{null_eq_cov} can be constructed using Theorem \ref{thm_eq_cov}. For the sake of brevity, we omit the details. 

\section{Finite-sample properties}\label{sec_sim}

 \begin{figure}[!ht]
     \centering
  \includegraphics[width=0.3\columnwidth]{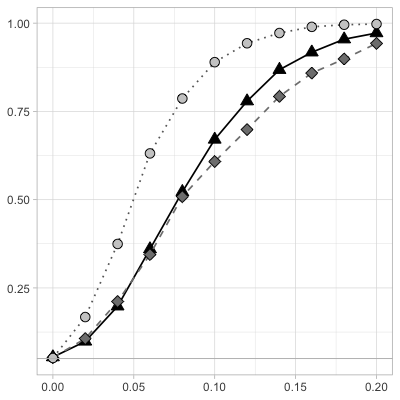}  \hspace{0.5cm}
   \includegraphics[width=0.3\columnwidth]{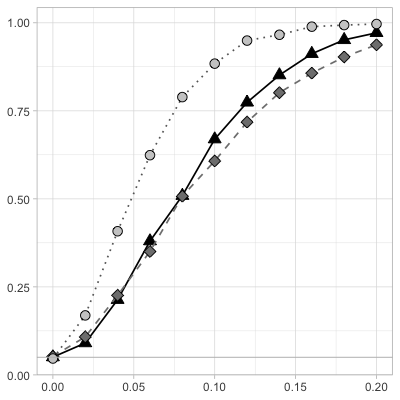}   \hspace{0.5cm}
    \includegraphics[width=0.3\columnwidth]{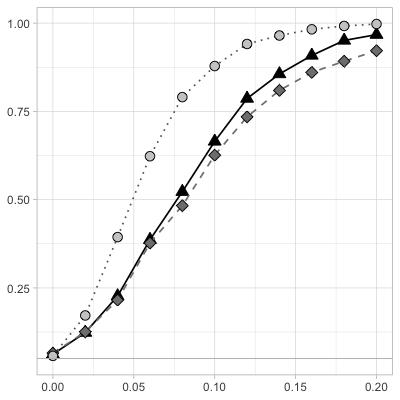}   
    
\vspace{0.5cm}
    
   \includegraphics[width=0.3\columnwidth]{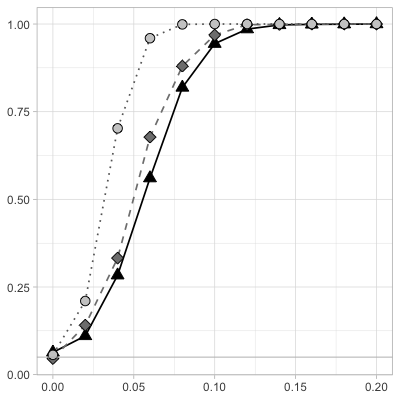} \hspace{0.5cm}
  \includegraphics[width=0.3\columnwidth]{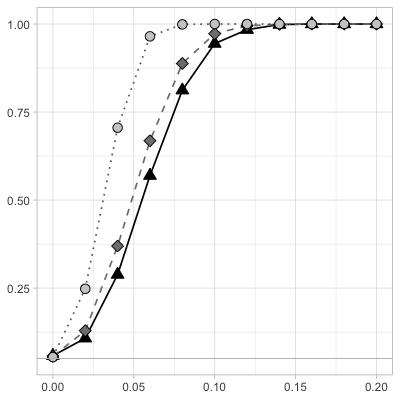}   \hspace{0.5cm}
 \includegraphics[width=0.3\columnwidth]{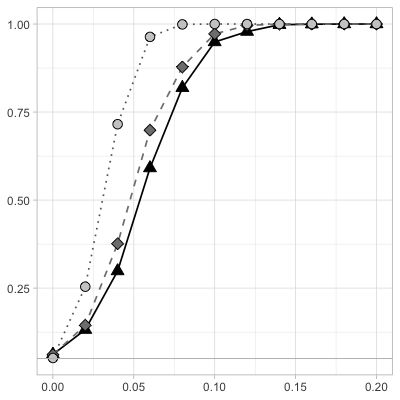}  
   
    \caption{ Rejection rates for the log-likelihood ratio test \eqref{test} for Scenario (i) (first row) and Scenario (ii) (second row) under  \eqref{alternative} based on $2,000$ simulation runs. First column: standard normally distributed data, second column: standardized t-distributed data with $15$ degrees of freedom, third column: centered exponentially distributed data with parameter $1$. The triangle indicates $n=100, p=60$, the square $n=120, p=90$ and the circle $n=180, p=120$. 
 The vertical grey line in each figure defines the nominal level $\alpha = 5 \%$.
       }\label{fig_power}
       
\end{figure}

In this section, we investigate the finite-sample properties of the test \eqref{test} under both the hypothesis and the alternative. Following \cite{qi_et_al_2019}, we consider the following alternative 
 \begin{align}
 \label{alternative}
 \bfSigma  = ( 1 - \delta) \bfI + \delta \mathbf{1},
 \end{align}
 where $\mathbf{1}$  denotes the $p\times p$ matrix filled with ones and $\mathbf{I}$ denotes the $p\times p$ identity matrix. Here, the parameter $\delta\geq 0$ determines the “distance” to the null hypothesis \eqref{null} (note that the choice $\delta =0$ corresponds to the null hypothesis \eqref{null}).  
In Fig. \ref{fig_power}, we display the empirical rejection rates of the test \eqref{test} for different choices of $\delta$, $n$, $p$, $q$, $p_i$ and different distributions for the random vector $\bfx$.
	All results are based on $2,000$ simulation runs and the components of $\bfx$ are independent identically distributed with respect to a standard normal distribution (first column), standardized t-distribution (second column) and centered exponential distribution (third column), respectively. The vertical gray line in each figure defines the nominal level $\alpha = 5 \%$. For the choice of the different groups, we consider the following two scenarios:
	\begin{enumerate}[(i)]
	\item $q=3$, $p_1 = p_2 = p_3 =p/3$,
	\item $q=p/2$, $p_1 = \cdots = p_{q-1}=1, p_q = q+1 $. 
	\end{enumerate}
	We observe a good approximation of the nominal level in all cases under consideration. 
	Moreover, the power increases reasonably as $\delta$ increases. It should be noted that the increase in power is a bit stronger for Scenario (ii) than for Scenario (i).
	The finite-sample properties do not significantly differ for the three underlying data generating distributions, as indicated by the asymptotic result provided in Theorem \ref{thm}.
	Overall, the test admits a desirable performance for finite-sample sizes, both for a large number and relatively small number of groups as covered by Scenarios (i) and (ii), and the accuracy improves for large sample size and dimension.

\subsection{Comparison to other tests} \label{sec_comp}
In this section, we compare the test given in \eqref{test} to two trace criteria for the null hypothesis \eqref{null} of blockdiagonality. Both procedures rely on the normal assumption for the data. In this case, substitution principles are widely available, and the tests are based on the centered sample covariance matrix
	\begin{align*}
		\mathbf{\hat \Sigma}^{\textnormal{cen}} =\frac{1}{n}\sum\limits_{k=1}^n \lb \mathbf{y}_k - \overline{\bfy} \rb 
		\lb \mathbf{y}_k - \overline{\bfy} \rb ^\top
	\end{align*}
and its blocks $\bfSigma_{ij}^{\textnormal{cen}}$ corresponding to the decomposition in \eqref{22}, where
\begin{align*}
    \overline{\mathbf{y}} = \frac{1}{n} \sum_{k=1}^n \bfy_k
\end{align*}
denotes the sample mean. 
On the one hand, we consider the tests proposed by \cite{bao2017test, jiang2013testing} who assume that $q\geq 2$ is fixed. On the other hand, we investigate a criterion proposed by \cite{li2017testing} for $q=2$ groups.
To make a meaningful comparison, we restrict our analysis to the case $q=2$ with normally distributed data. In this case, the test statistics proposed by \cite{bao2017test, jiang2013testing} both are equal to 
\begin{align*}
    L_n = \operatorname{tr} \lb \hat\bfSigma_{21}^{\textnormal{cen}} (\hat\bfSigma_{11}^{\textnormal{cen}})\inv \hat\bfSigma_{12}^{\textnormal{cen}} (\hat\bfSigma_{22}^{\textnormal{cen}}) \inv \rb ,
\end{align*}
which is shown to satisfy 
\begin{align*}
    T_1 = \frac{L_n - m_n  }{\sqrt{v}} \cond \mathcal{N}(0,1)
\end{align*}
under $H_0$ given in \eqref{null} \citep[see Theorem 3.2 in the work of][]{jiang2013testing}. 
Here, we set
\begin{align*}
    m_n = \frac{p_2 r_{n2} }{r_{n1} + r_{n2}},
    v = \frac{2 h^2 r_1^2 r_2^2 }{(r_1 + r_2)^4} , ~
    h = \sqrt{r_1 + r_2 - r_1r_2}
\end{align*}
and assume that $r_{n1} = p_2 / p_1 \to r_1 \in (0,\infty)$ and $r_{n2} = p_2 / (n - 1 - p_1) \to r_2 \in (0,\infty), p_2 <n.$ \\ 
Coming back to the test investigated by \cite{li2017testing}, we define
\begin{align*}
    \hat\gamma_{ij}= \frac{1}{(n-2) (n+1)}
    \left\{ \tr \lb \hat\bfSigma_{ij}^{\textnormal{cen}} \hat\bfSigma_{ji}^{\textnormal{cen}}  \rb 
    - \frac{1}{n - 1} \tr ( \hat\bfSigma_{ii}^{\textnormal{cen}} ) \tr (\hat\bfSigma_{jj}^{\textnormal{cen}}) \right\}, 
    ~ i,j\in \{1,2\}.
\end{align*}
    If $p\to\infty$ as $n\to\infty$, the authors show that 
    \begin{align*}
        T_2 = \sqrt{\frac{(n-2) (n+1)}{2}} \frac{\hat\gamma_{12}}{\sqrt{\hat\gamma_{11} \hat\gamma_{22}}}
        \cond \mathcal{N}(0,1)
    \end{align*}
    under $H_0$ in \eqref{null} assuming that
    \begin{align*}
        0 < \lim_{n\to\infty} \tr \bfSigma^k  < \infty, ~
        k \in\{ 1,2,4\}.
    \end{align*}
    For a prescribed level $\alpha \in (0,1),$ we reject $H_0$ if $T_2 > u_{1-\alpha} $ (or $T_1 > u_{1-\alpha} $ when using the test based on $T_1$). For simplicity of presentation, we refer to the test decisions based upon the statistics $T_1$ and $T_2$ also by $T_1$ and $T_2$. 
       We expect the test $T_2$ to admit a strong rejection rate under the alternative \eqref{alternative} since the core part $\hat\gamma_{12}$ of the statistic $T_2$ compares the trace of the product of the two off-diagonal blocks to the product of traces of the two diagonal blocks.  This difference is expected to be large under the alternative \eqref{alternative}. 
    In Table \ref{table}, the empirical rejection rates of $T_1, T_2$ and the log-likelihood ratio test LLRT (see \eqref{test}) under the null hypothesis \eqref{null} and the alternative \eqref{alternative} are displayed. 
    Additionally, we consider the alternative model 
    \begin{align} \label{model}
    \bfy^{(1)} = (1+\delta) \mathbf{z}^{(1)}, ~ \bfy^{(2)} = \mathbf{z}^{(2)} + \delta  \mathbf{z}^{(1)}  
    \end{align} 
    (recall the notation in \eqref{notation}), where $\mathbf{z}^{(i)} \sim \mathcal{N}_{p_i} (\mathbf{0}, \mathbf{I})$ are independent for $i\in \{1,2\}.$ Note that the choice $\delta = 0$ in \eqref{model} corresponds to the null hypothesis \eqref{null}. 
    This model has been considered in previous works \citep[see, e.g.,][]{qi_et_al_2019}. 
    The corresponding empirical rejection rates can be found 
    in Table \ref{table2}. 
    In most cases, the nominal level $\alpha = 5\% $ is approximated well by all tests under the null hypothesis. Under the alternative \eqref{alternative}, the test $T_2$ outperforms $T_1$ as well as the LLRT in terms of empirical power. The LLRT still performs reasonably well under this alternative, while the performance of $T_1$ is not satisfying in most cases. 
      Under model \eqref{model}, we observe for all three tests a strong rejection rate under the alternative with $T_2$ leading the field. 
      
      In summary, we observe that $T_2$ is slightly preferable in the cases under consideration, 
      which is not surprising as $T_2$ is just tailored to the case of two groups with normally distributed data, while the user may apply the LLRT in a broader context. 
      However, even in this restrictive situation, the LLRT still forms a reliable criterion with a reasonable rejection rate.  
    We emphasize that the theoretical statistical guarantees for $T_1$ and $T_2$ rely on normally distributed data and a fixed number $q$ of groups, while the LLRT is shown to satisfy a CLT beyond the normal assumption for a possibly increasing number of groups (Theorem \ref{thm}).

    \renewcommand{\arraystretch}{1.4}
    \begin{table} 
    \caption{\label{table} Empirical rejection rates of the tests $T_1, T_2$ and the LLRT under the null hypothesis \eqref{null} and the alternative \eqref{alternative} for different values of $n,p_1,p_2, q=2$, $\mathbf{x} \sim \mathcal{N}(\mathbf{0}, \mathbf{I})$ and $2,000$ simulation runs. }
    \begin{tabular}{c ccc ccc ccc}
 & & $\delta = 0$ & & & $\delta = 0.15$ & & &  $\delta = 0.3$ & 
\\
\cline{2-10}
   $(n, p, p_1)$ & $T_1$ & $T_2$ & LLRT 
   & $T_1$ & $T_2$ & LLRT
   & $T_1$ & $T_2$ & LLRT \\
   $(150, 80, 70)  $ & 0.048 & 0.041 & 0.051
    & 0.529 &  1.000 & 0.787
    & 0.717 & 1.000 & 0.995  \\
    $(150,80,40)$  & 0.054 & 0.054 & 0.059
 & 0.418 & 1.000 & 0.838
 & 0.518 & 1.000 & 0.995\\
    $(150,100,80) $      & 0.057 & 0.060 & 0.049
 & 0.331 & 1.000 & 0.674
 & 0.423 & 1.000 & 0.949   \\
    $(200,50,25)$       & 0.048 &  0.051 & 0.050
& 0.949 & 1.000 & 1.000
&  0.997 & 1.000 & 1.000   \\
      $(200, 120, 60) $   & 0.048 &  0.045 & 0.044
&  0.395 & 1.000 & 0.889
& 0.454 & 1.000 & 0.994   \\
    $(200, 120, 90) $ &  0.059 & 0.053 & 0.056
& 0.420 & 1.000 & 0.857
&  0.497 & 1.000 & 0.997\\ 
    $(250,100,75)$ &
    0.048 & 0.061 & 0.050
& 0.810 & 1.000 & 0.997
& 0.907 & 1.000 & 1.000 \\
    $(250,150,100)$ &  0.053 & 0.052 & 0.053
 & 0.410 &  1.000 & 0.936
& 0.482 & 1.000 & 0.999 \\ 
    $(250,200,100)$ & 
   0.040 & 0.046 & 0.045
&  0.251 & 1.000 & 0.665
&  0.270 & 1.000 & 0.890 \\
\end{tabular}
\\[5ex]
     \caption{\label{table2} Empirical rejection rates of the tests $T_1, T_2$ and the LLRT under the null hypothesis \eqref{null} and the alternative model \eqref{model} for different values of $n,p_1,p_2, q=2$, $\mathbf{x} \sim \mathcal{N}(\mathbf{0}, \mathbf{I})$ and $2,000$ simulation runs. }
    \begin{tabular}{c ccc ccc ccc}
 & & $\delta = 0$ & & & $\delta = 0.1$ & & &  $\delta = 0.2$ & 
\\
\cline{2-10}
   $(n, p, p_1)$ & $T_1$ & $T_2$ & LLRT 
   & $T_1$ & $T_2$ & LLRT
   & $T_1$ & $T_2$ & LLRT \\
   $(150, 80, 70)  $ & 0.048 & 0.042 & 0.051
 & 0.086 & 0.103 & 0.092
 & 0.277 & 0.457 & 0.290 \\
    $(150,80,40)$ & 0.054 & 0.054 & 0.059
 & 0.188 & 0.287 & 0.183
& 0.915 & 0.990 & 0.895 \\
    $(150,100,80) $     & 0.057 & 0.060 & 0.049
&  0.090 & 0.122 & 0.093
& 0.356 & 0.642  & 0.349  \\
    $(200,50,25)$       & 0.048 &  0.051 & 0.050
 & 0.334 & 0.407 & 0.340
& 0.998 & 0.100 & 0.999   \\
      $(200, 120, 60) $   &  0.048 &  0.045 & 0.044
 & 0.251 & 0.397 & 0.238
& 0.981 & 0.100 & 0.976  \\
    $(200, 120, 90) $ &  0.059 & 0.053 & 0.056
 & 0.140 & 0.204 & 0.147
& 0.682 & 0.923 & 0.671 \\ 
    $(250,100,75)$ &
     0.048 & 0.061 & 0.050
&  0.181 & 0.259 & 0.180
& 0.921 & 0.981 & 0.915\\
    $(250,150,100)$ &  
    0.053 & 0.052 & 0.053
& 0.212 & 0.350 & 0.203
 &  0.947 & 0.100 & 0.937 \\ 
    $(250,200,100)$ & 
    0.040 & 0.046 & 0.045
 & 0.281 & 0.543 & 0.243
&  0.993 & 1.000 & 0.969 \\
\end{tabular}
    \end{table}

\subsection{The almost-singular case}

 \begin{figure}[!ht]
     \centering
  \includegraphics[width=0.3\columnwidth]{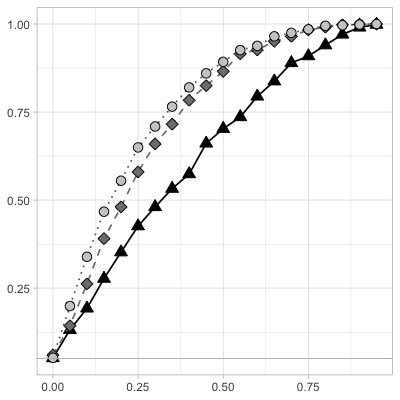}  \hspace{0.5cm}
   \includegraphics[width=0.3\columnwidth]{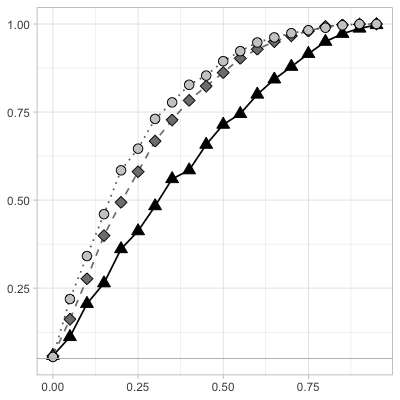}   \hspace{0.5cm}
    \includegraphics[width=0.3\columnwidth]{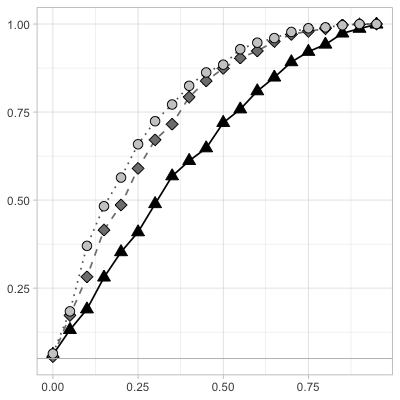}   
    
\vspace{0.5cm}
    
   \includegraphics[width=0.3\columnwidth]{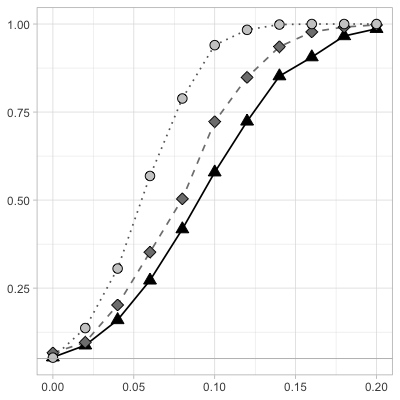} \hspace{0.5cm}
  \includegraphics[width=0.3\columnwidth]{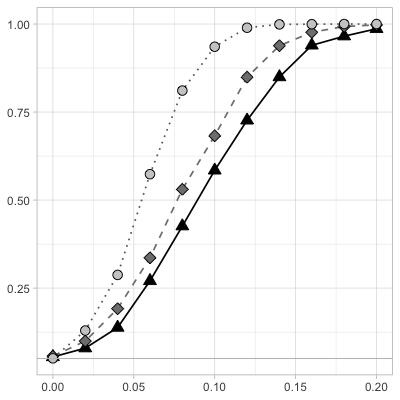}   \hspace{0.5cm}
 \includegraphics[width=0.3\columnwidth]{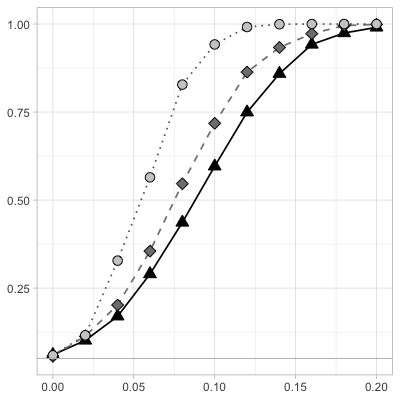}  
   
    \caption{ Rejection rates for the log-likelihood ratio test \eqref{test} for Scenario (i) (first row, defined at the beginning of Section \ref{sec_sim}) and Scenario (ii) (second row) under  \eqref{alternative} based on $2,000$ simulation runs. First column: standard normally distributed data, second column: standardized t-distributed data with $15$ degrees of freedom, third column: centered exponentially distributed data with parameter $1$. The triangle indicates $n=100, p=98$, the square $n=120, p=118$ and the circle $n=180, p=178$ for Scenario (i). For Scenario (ii), the corresponding parameters are $n=100, p=99$ (triangle), $n=120, p=117$ (square) and $n=180, p=177$ (circle), respectively.   
 The vertical gray line in each figure defines the nominal level $\alpha = 5 \%$.
       }\label{fig_power_sing}
       
\end{figure} 
\ND{
In Theorem \ref{thm}, we assume that the dimension-to-sample-size ratio $p/n$ is uniformly bounded away from $1$, which ensures that the variance $\sigma_n^2$ remains bounded. In this subsection, we will demonstrate that the test proposed in \eqref{test} can admit a desirable performance, especially under the null hypothesis \eqref{null}, even if the variance explodes. For this purpose, we consider Scenarios (i) and (ii) in cases where the dimension $p$ is close to the sample size $n$. The two scenarios determining the group structure are defined at the beginning of Section \ref{sec_sim}. 
In Fig. \ref{fig_power_sing}, we display the empirical rejection rates of the test \eqref{test} for different choices of $\delta$, $n$, $p$, $q$, $p_i$ and different distributions for the random vector $\bfx$. The nominal level $\alpha = 5\%$ is approximated well under the null hypothesis ($\delta =0$) in all cases under consideration. Comparing Fig. \ref{fig_power_sing} to Fig. \ref{fig_power}, the increase of the power curve is a little flatter if the dimension $p$ is close to the sample size $n$ for Scenario (ii) and significantly flatter for Scenario (i). This indicates that a central limit theorem for the log-likelihood test statistic under the null hypothesis might still hold true if $p/n \to 1$ as $n\to \infty$ but, unsurprisingly, at the cost of a lower speed of convergence as the variance is no longer bounded. Under the alternative for Scenario (i), the test does not yield a useful decision criterion. 
However, we still observe a reasonable behavior of the test under $H_0$ for both Scenarios (i) and (ii) as well as under the alternative for Scenario (ii).
 }

\subsection{Analysis of the 16 Personality Factor Questionnaire} 

To further illustrate the usefulness of our method, we analyze the 16 Personality Factor Questionnaire (16PF) data set. 
The questionnaire goes back to \cite{cattell1957personality, cattell1973} and provides a comprehensive measure of normal-range personality characteristics \citep[see also][]{cattell2008sixteen}. The 16 proposed factors include warmth, reasoning, emotional stability, dominance, liveliness, rule-consciousness, social boldness, sensitivity, vigilance, abstractedness, privateness, apprehension, openness to change, self-reliance, perfectionism, tension. In psychology, there is a controversy about the appropriate number of personality factors in connection with 16PF \citep[e.g., see][]{cattell1986number, mershon1988number}. Nowadays, most psychologists can agree on a number of 5 factors, the so-called Big Five: openness, conscientiousness, extraversion, agreeableness, neuroticism  \citep[see, e.g.][]{barrick1991big, de2000big, roccas2002big, gosling2003very}. Nevertheless, the 16PF is widely applied in various contexts where an in-depth personality analysis is needed. For example, it can be used in order to construct effective self-development goals and career plans adjusted to a worker's  individual strengths and limitations \citep[see, among many others,][]{carson1998integration, krug199016, watterson2002}. The relevance of 16PF is emphasized by the fact that it had been adapted to over 35 languages worldwide.

The data set "PF16" under consideration (available at \url{https://openpsychometrics.org/_rawdata/}) includes the ratings of $49159$ persons concerning $163$ statements about their personality on the following scale of accuracy: (1) disagree, (2) slightly disagree, (3) neither agree nor disagree, (4) slightly agree, (5) agree. 

For the following analysis, we centered the data with the sample mean. Since we will perform the test only on subsamples of size $n << 49159$, this will not cause a mean shift in the CLT given in Theorem \ref{thm} as discussed in detail at the end of this subsection. 
The number of groups is $q=16$ with sizes $p_i = 10$ for $i\in \{1, \ldots, 16\} \setminus \{ 2 \} $ and $p_2 = 13,$ so that the total dimension indeed sums up to $p=163$.
In order to quantify how far the sample covariance matrix $\hat\bfSigma$ deviates from a block diagonal structure, we propose the following quantity
\begin{align*}
    \tau(\hat \bfSigma) = \frac{ || \hat\bfSigma - \hat\bfSigma_{\textnormal{block}}  || }{|| \hat\bfSigma || } \geq 0. 
\end{align*}
Here, 
\begin{align*}
    \hat\bfSigma_{\textnormal{block}} = \begin{pmatrix}
    \hat\bfSigma_{11} & \mathbf{0} & \mathbf{0} & \ldots & \mathbf{0} \\
    \mathbf{0} & \hat\bfSigma_{22} & \mathbf{0} & \ldots & \mathbf{0} \\ 
    \vdots & \vdots & \vdots & \ddots & \vdots \\ 
    \mathbf{0} & \mathbf{0} & \mathbf{0} & \ldots & \hat\bfSigma_{16,16}
    \end{pmatrix}
\end{align*}
is the block-diagonal version of $\hat\bfSigma$
and $|| \cdot ||$ denotes an appropriate matrix norm.  Note that $\tau(\hat \bfSigma)$ equals $0$ if and only if $\hat \bfSigma = \hat\bfSigma_{\textnormal{block}}$ and $\hat\bfSigma \neq \mathbf{0}$. For the $1$-norm or the Frobenius norm, we have $\tau(\hat \bfSigma) \in [0,1]$ and $\tau(\hat \bfSigma)$ equals $1$ if and only if $\hat\bfSigma_{ii} = \mathbf{0}$ for $1 \leq i \leq 16.$ ($\mathbf{0}$ denotes always a matrix of appropriate dimension filled with zeros.) 
In Table \ref{table3}, we display the values of $\tau(\hat\bfSigma)$ for different matrix norms. 
\begin{table}[h]
\begin{center}
\caption{Values of $\tau(\hat\bfSigma)$ for different matrix norms.} \label{table3}
\begin{tabular}{cccc}
    Norm & spectral & Frobenius & 1-norm   \\
    $\tau(\hat \bfSigma)$ & $0.7962393$    & $0.8005626$  & $0.825113$  
\end{tabular}
\end{center}
\end{table} 
Since the values of $\tau (\hat\bfSigma)$ are not close to $0$, assuming a block diagonal structure seems not reasonable and the null hypothesis of blockdiagonality should be rejected. 
In order to demonstrate the usefulness of our approach for high-dimensional data sets, we proceed with a data reduction. We interpret the answers to the questionnaire as i.i.d. observations from a random vector of dimension $p=163$. Then we choose a sample of persons of size $n<<49159$ uniformly at random and iterate this procedure. 
The test in \eqref{test} rejects the null hypothesis \eqref{null} of blockdiagonality for $p, q, p_i$ as given above and all values of the size $n$ of the subsample under consideration (e.g., $n\in \{ 170, 200, 250\}$), that is, the empirical rejection rate equals $1$. This means that the LLRT detects the dependency between the 16 factors indicated by the values of $\tau(\hat\bfSigma)$. 

Motivated by the Big Five, we choose five subgroups corresponding to the factors openness to change, perfectionism, privateness, warmth and apprehension, and analyze the corresponding sample covariance matrix $\hat\bfSigma_{\textnormal{sub}}.$ The “measure” $\tau(\hat\bfSigma_{\textnormal{sub}})$ for blockdiagonality of $\hat\bfSigma_{\textnormal{sub}}$ can be defined similarly to $\tau (\hat\bfSigma)$ and is displayed in Table \ref{table4}.
\begin{table}[h]
\caption{Values of $\tau(\hat\bfSigma_{\textnormal{sub}})$ for different matrix norms.} \label{table4}
\begin{center} 
\begin{tabular}{cccc}
    Norm & spectral & Frobenius & 1-norm   \\
     $\tau(\hat\bfSigma_{\textnormal{sub}})$&  $0.5270513$  & $0.5003968$  & $0.4909513$
\end{tabular}
\end{center} 
\end{table}
The values of $\tau(\hat\bfSigma_{\textnormal{sub}})$ displayed above differ significantly from $0$ but less extremely than those of $\tau (\hat\bfSigma)$. Still, we expect that a reasonable test rejects the null hypothesis at a high rate when iterated over randomly chosen subsamples. 
In the Table \ref{table5}, the empirical rejection rate is displayed for the test given in \eqref{test} applied to  $500$ subsamples of size $n\in \{ 51, 55, 60, 65, 70, 75\} $ chosen at random and dimension $p=50$, $q=5$, $p_i=10$ for $1 \leq i \leq 5$. The choice of the subgroups is motivated by the Big Five, as explained above. 
\begin{table}[h]
\caption{Empirical rejection rate for the test \eqref{test} applied to  $500$ subsamples of size $n\in \{ 51, 55, 60, 65, 70, 75\} $ chosen at random and dimension $p=50$, $q=5$, $p_i=10$ for $1 \leq i \leq 5$.}
\label{table5}
\begin{center} 
\begin{tabular}{c p{2cm} p{2cm} p{2cm} p{2cm} p{2cm} p{2cm}}
    $n$ & 51 & 55 &  60 & 65 & 70 & 75 \\
    rejection rate & 0.774 & 0.92 & 0.976 &0.992 & 0.966 & 1   \\
\end{tabular}
\end{center} 
\end{table} 
Desirably, as the values of $\tau(\hat\bfSigma)$ and  $\tau(\hat\bfSigma_{\textnormal{sub}})$ are not close to $0$, we observe that the null hypothesis is rejected for an overwhelming number of cases under consideration. In this section, we have seen that the LLRT does not only perform well for simulated data in various scenarios, but it can also be used in order to detect dependent factors appearing in real data sets. 

Moreover, our statistical analysis reflects the controversy in psychology about the appropriate number of factors describing a human's personality. It confirms that the factors given in 16PF are not independent, even when choosing a smaller group of factors which can be associated with the Big Five. This observation sheds light on a fundamental difference between these two approaches: 
Although personality traits are thought to be dependent, the Big Five in contrast to 16PF are modelled as independent factors received through the factor analysis using orthogonal rotations, which simplifies the statistical analysis and the psychological interpretation. 
This may explain why the Big Five are very popular as a universal model in personality research, while the 16PF is widely applied in various everyday scenarios, e.g., by mental health professionals and career coaches. 

Concluding this data example, we would like to comment on two technical issues relating to Theorem \ref{thm}. First, the assumption that the data follows a continuous distribution is obviously violated.  However, as the number $q$ of groups is relatively small in comparison to the considered sample sizes $n$, we can interpret $q$ asymptotically as a fixed (or negligible) quantity in comparison to $n.$
Then, the continuity assumption in Theorem \ref{thm} is not necessary (see Remark \ref{rem_substitution}). 
Second, the data are assumed to be centered, and in general, we expect a mean shift in Theorem \ref{thm} when centering with the sample mean. In our application, we center with the full sample mean, while the statistical analysis is restricted to subsamples of much smaller size $n << 49159$. Thus, the full sample mean can be interpreted as the true population mean and does not cause a shift in the mean structure.

\section{Proofs} \label{sec_proofs}
In this section, we provide the proof of Theorem \ref{thm} and present the necessary auxiliary results. We conclude with the proof of Theorem \ref{thm_eq_cov}.
In the following, we make use of the notation $a\lesssim b$ which means that $a$ is less than or equal to $b$ up to a positive constant, that is, there exists some $C>0$ independent of $n\in\N$ such that $a \leq C b$. 
\begin{proof}[\textbf{\upshape Proof of Theorem \ref{thm}. }] 
Note that, under the null hypothesis \eqref{null}, we have
\begin{align} \label{a2}
	V_n = \frac{ |\bfSigma| | \hat{\mathbf{I}} | } { \prod\limits_{i=1}^q \lb  |\bfSigma_{ii}| | \hat{\mathbf{I}}_{ii} | \rb }
	= \frac{  | \hat{\mathbf{I}} | } { \prod\limits_{i=1}^q | \hat{\mathbf{I}}_{ii} | },
\end{align}
	where
	\begin{align*}
		\hat{\mathbf{I}} = & \frac{1}{n} \bfX_n \bfX_n^\top, 
		\hat{\mathbf{I}}_{ii} =  \frac{1}{n} \bfX_{n,i} \bfX_{n,i}^\top, 
		\bfX_n =  (\bfx_1, \ldots \bfx_n) = (\bfb_1, \ldots, \bfb_p)^\top , \\
		\bfX_{n,i} = & (\bfb_{p_{i-1}^\star+1}, \ldots, \bfb_{p_i^\star})^\top, 
		p_i^\star =  \sum\limits_{j=1}^i p_j ,
	\end{align*}
	for $1 \leq i \leq q$, where we set $ p_0^\star =0$.
In order to establish a more handy representation for determinants of the sample covariance matrix, we proceed with a QR decomposition of $\bfX_n^\top$ and $\bfX_{n,i}^\top$ as explained in detail in Section 2 of \cite{wang2018} and get
	\begin{align*}
		\hat{\mathbf{I}} = &  \frac{1}{n} \prod\limits_{i=1}^p \bfb_i^\top \bfP (i-1) \bfb_i, ~ 
		\hat{\mathbf{I}}_{ii} =  \frac{1}{n} \prod\limits_{j=p_{i-1}^\star+1}^{p_i^\star} \bfb_j^\top \bfP (p_{i-1}^\star+1; j-1) \bfb_j , ~ 1 \leq i \leq q,
	\end{align*}
	where
	\begin{align}
		 & \bfP(p_{i-1}^\star+1 ; j-1)  
		  =  \mathbf{I}   -\bfX_{n} (p_{i-1}^\star+1 ; j-1)^\top
	\lb \bfX_n (p_{i-1}^\star+1 ; j-1) \bfX_n (p_{i-1}^\star+1 ; j-1)^\top \rb\inv \bfX_n (p_{i-1}^\star+1 ; j-1), \label{def_projection_matrix}
	\end{align}
	and 
	\begin{align*}
	\bfP(j) =& \mathbf{P}(1;j), 
	\end{align*}
	denote the projection matrices on the orthogonal complements of span$(\bfb_{p_{i-1}^\star+1}, \ldots \bfb_{j-1} )$ and span$(\bfb_1, \ldots \bfb_{j} )$, respectively. Here, we denote
	\begin{align*}
			\bfX_n (i;j) = & (\bfb_i, \ldots, \bfb_j)^\top, ~ 1 \leq i \leq j\leq p
	\end{align*}
	and $\bfP(0) = \mathbf{I} = \bfP(i;j)$ for $j<i$. 
	This implies
	\begin{align*}
		\log V_n = &
		\sum\limits_{i=1}^p \log \lb \bfb_i^\top \bfP(i-1) \bfb_i \rb 
		- \sum\limits_{i=1}^q \sum\limits_{j=p_{i-1}^\star+1}^{p_i^\star} \log \lb \bfb_j^\top \bfP(p_{i-1}^\star+1;j-1) \bfb_j \rb \\
		= & \sum\limits_{i=p_1 + 1}^p \log \lb \bfb_i^\top \bfP(i-1) \bfb_i \rb 
		- \sum\limits_{i=2}^q \sum\limits_{j=p_{i-1}^\star+1}^{p_i^\star} \log \lb \bfb_j^\top \bfP(p_{i-1}^\star+1;j-1) \bfb_j \rb,
	\end{align*}
	where we used $\bfP(i - 1) = \bfP(1;i - 1)$ for $1\leq i \leq p_1$. 
	In the following, we will make use of Stirling's formula
	\begin{align*}
		\log n! = n \log n - n + \frac{1}{2 } \log ( 2 \pi n) + \frac{1}{12 n}
		+ \mathcal{O} \lb n^{-3} \rb, ~ n\to\infty. 
	\end{align*}
	As a preparation, we note that
	\begin{align}
		& \sum\limits_{i=p_1+1}^p  \log ( n - i + 1) 
		- \sum\limits_{i=2}^q \sum\limits_{j=p_{i-1}^\star+1}^{p_i^\star} \log  ( n - j + 1 + p_{i-1}^\star )  
		=  \log \frac{( n - p_1)!}{(n - p)!} - \sum\limits_{i=2}^q \log \frac{n!}{(n - p_i)!} \nonumber \\
		= & \sum\limits_{i=2}^q \lb n - p_i + \frac{1}{2} \rb \log \lb 1 - \frac{p_i}{n} \rb 
		- \lb n - p + \frac{1}{2} \rb \log \lb 1 - \frac{p}{n} \rb 
		- \sum\limits_{i=2}^q \lb \frac{1}{12 n } - \frac{1}{12 ( n - p_i) }\rb
		+ o(1) \nonumber \\
		= & \mu_n + \frac{ \sigma_n^2}{2} 
		- \frac{1}{12} \sum_{i=2}^q \frac{p_i}{n (n - p_i) }
		+   o(1)
		=  \mu_n + \frac{ \sigma_n^2}{2}  + o(1), \label{cal_mu}
	\end{align}
	where we used the fact 
	\begin{align} \label{n-p_i_infty}
	\min_{1 \leq i \leq q} (n - p_i) \to\infty,
	\end{align}
	which is a consequence of our assumptions. 
	Defining for $p_1 + 1 \leq i \leq p, 2 \leq j \leq q$
	\begin{align*}
		X_i  =  & \frac{ \bfb_i^\top \bfP(i-1) \bfb_i - (n - i + 1 ) }{ n - i +1}, ~
		X_{j,i}  =  \frac{ \bfb_i^\top \bfP(p_{j-1}^\star+1;i-1) \bfb_i - (n - i + 1 + p_{j-1}^\star ) }{n - i + 1 + p_{j-1}^\star },
		\\ 
		Y_i = & \log ( 1 + X_i) - \lb X_i - \frac{X_i^2}{2} \rb, ~ 
		Y_{j,i} =  \log ( 1 + X_{j,i}) - \lb X_{j,i} - \frac{X_{j,i}^2}{2} \rb,
	\end{align*}
	we decompose, using \eqref{cal_mu},
	\begin{align*}
		& \log V_n - \mu_n 
		 = \sum\limits_{i=p_1+1}^p X_i 
		-  \sum\limits_{j=2}^q \sum\limits_{i=p_{j-1}^\star+1}^{p_j^\star} X_{j,i} 
		-  \lb  \sum\limits_{i=p_1+1}^p \frac{X_i^2}{2} 
		- \sum\limits_{j=2}^q \sum\limits_{i=p_{j-1}^\star+1}^{p_j^\star} \frac{X_{j,i}^2}{2} 
		- \frac{\sigma_n^2}{2} 
		\rb 
		 + \sum\limits_{i=p_1+1}^p Y_i 
		- \sum\limits_{j=2}^q \sum\limits_{i=p_{j-1}^\star+1}^{p_j^\star} Y_{j,i} 
		+ o_{\textnormal{Pr}}(1).
	\end{align*}
	The assertion of Theorem \ref{thm} is then implied by Lemmas \ref{lem_conv_x},  \ref{lem_conv_x_pow2} and \ref{lem_conv_log_term}.
	\end{proof}
	\subsection*{Auxiliary results for the proof of Theorem \ref{thm} } 
	This section contains the auxiliary results needed for the proof of Theorem \ref{thm} and its proofs. 
	\begin{lemma} \label{lem_conv_x}
		Under the assumptions of Theorem \ref{thm}, it holds
		\begin{align*}
			\frac{\sum\limits_{i=p_1+1}^p X_i 
		- \sum\limits_{j=2}^q \sum\limits_{i=p_{j-1}^\star+1}^{p_j^\star} X_{j,i} }{\sigma_n} \cond \mathcal{N}(0,1), ~ n\to\infty. 
		\end{align*}
	\end{lemma}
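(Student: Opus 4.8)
The plan is to recognize the numerator in Lemma \ref{lem_conv_x} as a sum of martingale differences and to invoke a martingale central limit theorem. Since the index blocks $\{p_{j-1}^\star+1, \ldots, p_j^\star\}$ for $j=2,\ldots,q$ partition $\{p_1+1,\ldots,p\}$, I would reindex the first sum by block and merge the two sums into a single one over $i$. Writing $j(i)$ for the block containing $i$ and setting
\[
D_i := X_i - X_{j(i),i}, \qquad p_1 + 1 \leq i \leq p,
\]
the target becomes $S_n = \sum_{i=p_1+1}^p D_i$. Take the filtration $\mathcal{F}_i = \sigma(\bfb_1, \ldots, \bfb_i)$. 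The continuity assumption guarantees that the vectors $\bfb_1,\ldots,\bfb_{i-1}$ are almost surely linearly independent, so both $\bfP(i-1)$ and $\bfP(p_{j-1}^\star+1;i-1)$ are well-defined $\mathcal{F}_{i-1}$-measurable projections of the stated ranks, while $\bfb_i$ is independent of $\mathcal{F}_{i-1}$ with i.i.d.\ standardized entries (unit variance is without loss of generality, as $V_n$ is scale-invariant). Since $\E[\bfb_i^\top \mathbf{M}\bfb_i \mid \mathcal{F}_{i-1}] = \tr \mathbf{M}$ for any $\mathcal{F}_{i-1}$-measurable $\mathbf{M}$, the chosen centering makes $\E[X_i\mid\mathcal{F}_{i-1}] = \E[X_{j,i}\mid\mathcal{F}_{i-1}] = 0$, so $(D_i,\mathcal{F}_i)$ is a martingale difference array.

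By a standard martingale CLT (Brown's theorem / Hall and Heyde) it then suffices to establish the conditional variance condition $\sigma_n^{-2}\sum_{i} \E[D_i^2\mid\mathcal{F}_{i-1}] \conp 1$ together with a conditional Lyapunov condition $\sigma_n^{-(2+\nu)}\sum_i \E[|D_i|^{2+\nu}\mid\mathcal{F}_{i-1}] \conp 0$ for some $\nu>0$. For the variance I would use the identity, valid for symmetric $\mathbf{A},\mathbf{B}$ and a vector with i.i.d.\ standardized entries of fourth moment $\nu_4$,
\[
\operatorname{Cov}(\bfb^\top\mathbf{A}\bfb,\bfb^\top\mathbf{B}\bfb) = (\nu_4-3)\sum_k \mathbf{A}_{kk}\mathbf{B}_{kk} + 2\tr(\mathbf{A}\mathbf{B}).
\]
Applying this with $\mathbf{A}=\bfP(i-1)$ and $\mathbf{B}=\bfP(p_{j-1}^\star+1;i-1)$, and using idempotency together with the nesting $\bfP(i-1)\bfP(p_{j-1}^\star+1;i-1)=\bfP(i-1)$ (the range of the former lies in that of the latter), the trace terms collapse, and with $r := n-i+1$ and $\tilde r := n-i+1+p_{j-1}^\star$ one obtains
\[
\E[D_i^2\mid\mathcal{F}_{i-1}] = \lb \frac{2}{r} - \frac{2}{\tilde r} \rb + (\nu_4-3)\sum_{k=1}^n \lb \frac{\bfP(i-1)_{kk}}{r} - \frac{\bfP(p_{j-1}^\star+1;i-1)_{kk}}{\tilde r}\rb^2 .
\]
The deterministic first bracket summed over $i$ is an elementary harmonic sum that, via $\sum_m 1/m \approx \log$, reproduces $\sigma_n^2(1+o(1))$, consistent with the computation leading to \eqref{cal_mu}.

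The crux is therefore to show that the fourth-moment part is asymptotically negligible, i.e.
\[
(\nu_4-3)\sum_{i=p_1+1}^p \sum_{k=1}^n \lb \frac{\bfP(i-1)_{kk}}{r} - \frac{\bfP(p_{j-1}^\star+1;i-1)_{kk}}{\tilde r}\rb^2 \conp 0 .
\]
This is exactly where the universality (independence of $\nu_4$, hence robustness to non-normality) is gained, and I expect it to be the main obstacle. The mechanism is that the normalized diagonal entries of both nested projections concentrate around $1/n$: a projection of rank $d$ onto the random subspace spanned by the $\bfb$'s has diagonal entries close to $d/n$, so $\bfP(i-1)_{kk}/r$ and $\bfP(p_{j-1}^\star+1;i-1)_{kk}/\tilde r$ are each $\approx 1/n$ and their difference nearly cancels inside the square. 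Making this rigorous for non-Gaussian $\bfb$'s requires quantitative concentration of diagonal entries of projections onto spans of independent vectors under only $(4+\delta)$ moments, which I expect to rely on the auxiliary estimates of Section \ref{sec_aux_proof_thm}; this concentration step, rather than the CLT machinery, should absorb most of the work.

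Finally, the Lyapunov condition would follow from a Rosenthal-type moment inequality for centered quadratic forms: for $2 < 2+\nu \leq 4+\delta$ one bounds $\E[|\bfb_i^\top\mathbf{M}\bfb_i - \tr\mathbf{M}|^{2+\nu}\mid\mathcal{F}_{i-1}]$ in terms of $(\tr\mathbf{M}^2)^{1+\nu/2}$ and $\E|x_{11}|^{4+2\nu}\sum_k|\mathbf{M}_{kk}|^{1+\nu/2}$, which is precisely where the finite $(4+\delta)$th moment enters (choosing $\nu \leq \delta/2$). Dividing by $\sigma_n^{2+\nu}$ and summing the resulting $O(r^{-(1+\nu/2)})$ contributions over $i$ then yields the required $o(1)$ bound and completes the verification.
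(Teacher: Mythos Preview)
Your proposal is correct and mirrors the paper's proof almost step for step: the same martingale differences $Z_i=X_i-X_{g(i),i}$ with filtration $\mathcal{F}_i=\sigma(\bfb_1,\ldots,\bfb_i)$, the same quadratic-form covariance identity together with the nesting $\bfP(i-1)\bfP(p_{g(i)-1}^\star+1;i-1)=\bfP(i-1)$ to isolate the deterministic harmonic sum as $\sigma_n^2+o(1)$, the same reduction of the $(\nu_4-3)$ contribution to concentration of projection diagonals (handled in the paper by Lemmas \ref{lem_conv_pii_sq} and \ref{lem_conv_sigma2i}), and the same moment bound for quadratic forms (the paper cites Lemma B.26 of Bai--Silverstein, a Rosenthal-type inequality) with exponent $2+\delta/2$ to verify Lindeberg, which your Lyapunov condition implies.
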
 
	\begin{proof}[\textbf{\upshape Proof of Lemma \ref{lem_conv_x}. }] 
	Let $\mathcal{F}_i = \sigma ( \{ \bfb_1, \ldots, \bfb_i \} )$ denote the $\sigma$-field generated by $\bfb_1, \ldots, \bfb_i$ for $1 \leq i \leq p$. We write 
	\begin{align*}
			\sum\limits_{j=p_1+1}^p X_j 
		- \sum\limits_{i=2}^q \sum\limits_{j=p_{i-1}^\star+1}^{p_i^\star} X_{i,j} 
		= & \sum\limits_{i=2}^q \sum\limits_{j=p_{i-1}^\star+1}^{p_i^\star} \lb X_j - X_{i,j} \rb  
		= \sum\limits_{i=p_1+1}^p Z_i , 
	\end{align*}
	where
	\begin{align*}
		Z_i = & X_i - X_{g(i),i} , ~ p_1 + 1 \leq i \leq p,
	\end{align*}
	where $g(i) = k$ if $\bfx^{(i)}$ belongs to the $k$th group, that is, if $p_{k-1}+1 \leq i \leq p_k$. 
	We observe that 
	$
		 \E \left[ Z_i | \mathcal{F}_{i-1} \right]  
		=  0		
	$
	and that $Z_i$ is measurable with respect to $\mathcal{F}_i$ for $p_1 +1 \leq i \leq p$. Thus, we conclude that $(Z_i)_{p_1+1 \leq i \leq p}$ forms a martingale difference scheme with respect to the filtration $(\mathcal{F}_i)_{p_1 + 1 \leq i \leq p}$ scheme for every $n\in\N$. We aim to apply a central limit theorem for this dependency structure. 
	In order to calculate the limiting variance, we write
	\begin{align*}
		Z_i  & =  \bfb_i^\top 
		\frac{( n - i + 1 + p_{g(i) - 1}^\star) \bfP(i-1) - (n - i + 1) \bfP( p_{g(i)-1}^\star+1 ;i-1)}{ (n - i + 1) ( n - i + 1 + p_{g(i) - 1}^\star) }
		\bfb_i \\
		& \quad - \tr \lb \frac{( n - i + 1 + p_{g(i) - 1}^\star) \bfP(i-1) - (n - i + 1) \bfP( p_{g(i)-1}^\star+1 ;i-1)}{ (n - i + 1) ( n - i + 1 + p_{g(i) - 1}^\star) } \rb 		
		, ~ p_1 + 1 \leq i \leq p,
	\end{align*}
	and use the fact
	\begin{align} \label{var_quad_form}
		\E \lb \bfb_i^\top \mathbf{A} \bfb_i - \tr \mathbf{A} \rb^2 = 
		2 \tr \mathbf{A}^2 + (\nu_4 - 3 )  \tr \lb \mathbf{A}^{\odot 2} \rb, ~ 1 \leq i \leq p,
	\end{align}
	for any non-random matrix symmetric $\mathbf{A} \in\R^{n\times n}$, where $\nu_4 = \E [ b_{11}^4]$. Here, $\odot$ denotes the Hadamard product of matrices (entry-wise multiplication) and we use the notation $$ \mathbf{A} \odot \mathbf{A} = \mathbf{A}^{\odot 2}. $$
 
	Consequently, we observe
	\begin{align*}
		\E [ Z_i^2 | \mathcal{F}_{i-1}] 
		= & \E [ ( X_i - X_{g(i),i} )^2 | \mathcal{F}_{i-1} ] 
		=  2 \lb  \frac{ \tr \bfP(i-1)^2 }{ ( n - i + 1) ^2 } + \frac{ \tr \bfP( p_{g(i)-1}^\star+1 ;i-1)^2 } { ( n - i + 1 + p_{g(i) - 1}^\star)^2 }  - 2 \frac{\tr \lb \bfP( p_{g(i)-1}^\star+1 ;i-1) \bfP(i-1) \rb  }{ ( n - i + 1 + p_{g(i) - 1}^\star) ( n - i + 1) }\rb \nonumber \\
		& + (\nu_ 4 - 3 ) \Bigg(  \frac{ \tr \lb \bfP(i-1)^{ \odot 2}   \rb  }{ ( n - i + 1) ^2 } 
		+  \frac{ \tr \lb  \bfP( p_{g(i)-1}^\star+1 ;i-1) ^{\odot 2} \rb  } { ( n - i + 1 + p_{g(i) - 1}^\star)^2 }  
		 - 2 \frac{ \tr \lb  \bfP( p_{g(i)-1}^\star+1 ;i-1) \odot \bfP( i-1) \rb  } { ( n - i + 1 + p_{g(i) - 1}^\star) ( n - i + 1 ) } 
		\Bigg) \nonumber  \\
		= & \sigma_{n,1,i}^2 + (\nu_4 - 3) \sigma_{n,2,i}^2, 
	\end{align*}
	where
	\begin{align} 
		\sigma_{n,1,i}^2 & =  2 \lb  \frac{ 1 }{  n - i + 1} -  \frac{1 } { n - i + 1 + p_{g(i) - 1}^\star }  \rb, \nonumber \\
		\sigma_{n,2,i}^2 & =  \frac{ \tr \lb \bfP(i-1)^{\odot2} \rb  }{ ( n - i + 1) ^2 } 
		+  \frac{ \tr \lb  \bfP( p_{g(i)-1}^\star+1 ;i-1)^{ \odot 2} \rb  } { ( n - i + 1 + p_{g(i) - 1}^\star)^2 } 
		- 2 \frac{ \tr \lb  \bfP( p_{g(i)-1}^\star+1 ;i-1) \odot \bfP( i-1) \rb  } { ( n - i + 1 + p_{g(i) - 1}^\star) ( n - i + 1 ) } . \label{def_sigma_2}
	\end{align}
	Note that $\sum_{i=p_1 +1}^p \sigma_{n,2,i}^2 = o_{\textnormal{Pr} }(1)$ by Lemma \ref{lem_conv_sigma2i}. 
	For the term $\sigma_{n,1,i}^2$, we used that $\bfP(i-1) \bfP( p_{g(i)-1}^\star+1 ;i-1)  = \bfP (i-1)$. 
	Thus, we have for this term
	\begin{align}
		\sum\limits_{i= p_1 + 1}^p \sigma_{n,1,i}^2 
		= & 2 \lb \log (n - p_1 ) - \log ( n - p ) 
		-   \sum\limits_{i=2}^q \sum\limits_{j=p_{i-1}^\star+1}^{p_i^\star} \frac{1}{n - j + 1 + p_{i-1}^\star}  \rb  
		+ o(1) \nonumber \\
		=  &  2 \Bigg\{ \log (n - p_1 ) - \log ( n - p ) 
		-   \sum\limits_{i=2}^q \left\{   \log (n) - \log ( n - p_i^\star + p_{i-1}^\star)\right\}   
		 - \sum\limits_{i=2}^q \lb \frac{1}{2 n} - \frac{1}{2 ( n - p_i) } \rb \Bigg\}
		+ o(1) \nonumber \\
		=  &  2 \lb \log \lb 1 - \frac{p_1}{n} \rb  - \log \lb 1 - \frac{p}{n} \rb  
		+   \sum\limits_{i=2}^q   \log \lb 1 - \frac{p_i }{n} \rb  
		+ \frac{1}{2} \sum_{i=2}^q \frac{p_i}{n (n - p_i) }\rb  
		+ o(1) \nonumber \\
		= &   2 \lb  \sum\limits_{i=1}^q   \log \lb 1 - \frac{p_i }{n} \rb   - \log \lb 1 - \frac{p}{n} \rb \rb 
		+ o(1)
		= \sigma_n^2 + o(1), \label{a3}
	\end{align}
	where we used \eqref{n-p_i_infty}, $\sum_{i=2}^q p_i \leq p$ and the expansion for the partial sums of the harmonic series 
	\begin{align}
	\label{harmonic_series}
	\sum\limits_{k=1}^n \frac{1}{k}
	= \log n + \gamma + \frac{1}{2n} +  \mathcal{O}\lb n^{-2} \rb, ~ n\to\infty.
	\end{align}
	Here, $\gamma$ denotes the Euler-Mascheroni constant. 
	Note that the term in \eqref{a3} is bounded away from zero for all $n\in\N$. More precisely, we have applying inequality (33) of \cite{qi_et_al_2019}
	\begin{align} \label{bound_sigma}
		\sigma_n^2 \geq  2 \left\{ \sum\limits_{i=1}^q \log \lb 1 - \frac{p_i}{n} \rb 
				- \log \lb 1 - \frac{p}{n} \rb \right\} \geq  - 2\lb  \log \lb 1 - \frac{p}{n} \rb + \frac{p}{n} \rb ( 1-\eta)  > 0
	\end{align}
	uniformly over $n\in\N$ (recall that $\inf_{n\in\N} p/n > 0$ and $\max_{1 \leq i \leq q} p_i \leq \eta p$). 
	These considerations imply 
	\begin{align} \label{conv_var}
		 \sum\limits_{i=p_1 +1}^p \E \left[ \frac{Z_i^2}{\sigma_n^2} \Big| \mathcal{F}_{i-1} \right] \conp 1, ~n\to\infty. 
	\end{align}
	\\ Let $\varepsilon > 0$.  Then, using Lemma B.26 in \cite{bai2004} and recalling that $\E | x_{11}|^{4+\delta} <\infty$, we get 
	\begin{align}
		& \sum\limits_{i= p_1 +1}^p \E [ Z_{i}^2 I\{|Z_i| > \varepsilon \} ] 
		\leq   \frac{1}{\varepsilon^{\frac{\delta}{2}}}\sum\limits_{i= p_1 +1}^p \E [ |Z_{i}|^{2+\frac{\delta}{2}} ]
		\lesssim  \sum\limits_{i= p_1 +1}^p \frac{1}{\lb n - i  + 1\rb^{2+\frac{\delta}{2}}}
		\E \left| \bfb_i^\top \bfP(i-1) \bfb_i - (n - i + 1 ) \right|^{2+\frac{\delta}{2}} 
		\nonumber \\
		& + \sum\limits_{i=2}^q \sum\limits_{j=p_{i-1}^\star+1}^{p_i^\star}  \frac{1}{\lb n - j + 1 + p_{i-1}^\star\rb^{2+\frac{\delta}{2}}}
		\E \left| \bfb_j^\top \bfP(p_{i-1}^\star+1;i-1) \bfb_j - (n - j + 1 + p_{i-1}^\star ) \right|^{2+\frac{\delta}{2}} 
		\nonumber \\
		\lesssim & \sum\limits_{i= p_1 +1}^p \frac{1}{\lb n - i  + 1\rb^{1+\frac{\delta}{4}}}
		+ \sum\limits_{i=2}^q \sum\limits_{j=p_{i-1}^\star+1}^{p_i^\star}  \frac{1}{\lb n - j + 1 + p_{i-1}^\star\rb^{1+\frac{\delta}{4}}}  
		= o(1), ~n\to\infty. 
		\nonumber
	\end{align}
	Using \eqref{bound_sigma} and the fact that $(\sigma_n^2)_{n\in\N}$ is bounded, we see that $(Z_i/ \sigma_n )_{p_1 + 1 \leq i \leq p}$ satisfies the following Lindeberg condition for all $\varepsilon > 0$:
	\begin{align}
		\sum\limits_{i= p_1 +1}^p \E \left[ \frac{ Z_{i}^2 }{\sigma_n^2} I\{ \left| Z_i / \sigma_n \right| > \varepsilon \} \right] 
		= o(1), ~n\to\infty. \label{lindeberg}
	\end{align}
	Since \eqref{conv_var} and \eqref{lindeberg} hold true, we may apply a CLT for martingale difference schemes (e.g., see Corollary 3.1 in \cite{hall_heyde}) and the proof of Lemma \ref{lem_conv_x} concludes. 	
\end{proof}

\begin{lemma} \label{lem_conv_x_pow2}
	 Under the assumptions of Theorem \ref{thm}, it holds
	 \begin{align*}
	 	\frac{ \sum\limits_{i=p_1+1}^p \frac{X_i^2}{2} 
		- \sum\limits_{j=2}^q \sum\limits_{i=p_{j-1}^\star+1}^{p_j^\star} \frac{X_{j,i}^2}{2} 
		- \frac{\sigma_n^2}{2} } {\sigma_n} \conp 0, ~  n \to\infty.
	 \end{align*}
	\end{lemma}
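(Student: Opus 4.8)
The plan is to exploit the martingale structure of the summands together with the fact that, under the stated assumptions, $\sigma_n^2$ is bounded away from $0$ and $\infty$. Since $V_n$ is invariant under scaling of the data I may assume $\E[x_{11}^2]=1$, and I set $\mu_4=\E[x_{11}^4]$, $\mu_{4+\delta}=\E|x_{11}|^{4+\delta}$. Writing $f(x)=-\log(1-x)$ one has $\sigma_n^2/2=f(p/n)-\sum_{i=1}^q f(p_i/n)$; splitting the blocks into two groups of total sizes $a,b\asymp p$ (possible as $\max_i p_i\le\eta p$) and using superadditivity of $f$ gives $\sigma_n^2\ge 2f(p/n)-2f(a/n)-2f(b/n)=2\log\!\big(1+\tfrac{ab}{n(n-p)}\big)\gtrsim1$, while $\sigma_n^2\le 2f(p/n)\lesssim1$ because $\sup_n p/n<1$. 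Hence the assertion is equivalent to $T_n\conp\sigma_n^2$, where $T_n=\sum_{i=p_1+1}^pX_i^2-\sum_{j=2}^q\sum_{i=p_{j-1}^\star+1}^{p_j^\star}X_{j,i}^2$. I will split $T_n$ into a predictable part and a martingale remainder relative to $\mathcal F_{i-1}=\sigma(\bfb_1,\dots,\bfb_{i-1})$.

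Writing $r_i=n-i+1$, $r_{j,i}=n-i+1+p_{j-1}^\star$ and $\Delta_i=\sum_k[\bfP(i-1)]_{kk}^2$, the key computation is that for a rank-$r$ projection $\bfP$ independent of $\bfb$ (entries i.i.d., centred, unit variance) only even index patterns survive, giving $\E[\bfb^\top\bfP\bfb\mid\bfP]=r$ and $\Var(\bfb^\top\bfP\bfb\mid\bfP)=2r+(\mu_4-3)\sum_k[\bfP]_{kk}^2$. Hence $\E[X_i^2\mid\mathcal F_{i-1}]=(2r_i+(\mu_4-3)\Delta_i)/r_i^2$, and likewise for $X_{j,i}$. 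The predictable part of $T_n$ is therefore $\sum_{\mathrm{full}}2/r_i-\sum_{\mathrm{block}}2/r_{j,i}+(\mu_4-3)\big(\sum_{\mathrm{full}}\Delta_i/r_i^2-\sum_{\mathrm{block}}\Delta_{j,i}/r_{j,i}^2\big)$. After the substitution $m=r_i$ (resp.\ $m=r_{j,i}$) the two deterministic sums are harmonic sums whose logarithmic parts reproduce $\sigma_n^2$ and whose $1/(2m)$ and $O(m^{-2})$ corrections sum to $o(1)$ — the same Stirling bookkeeping as in \eqref{cal_mu}, using \eqref{n-p_i_infty}.

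For the $\mu_4$-dependent term I use $\sum_k[\bfP]_{kk}=r$ to write $\Delta=r^2/n+D$ with $D=\sum_k([\bfP]_{kk}-r/n)^2\ge0$. Because the number of full indices equals the number of block indices ($\sum_{j\ge2}p_j=p-p_1$), the $r^2/n$ contributions equal $(p-p_1)/n$ on both sides and cancel, so it remains to show $\sum_{\mathrm{full}}D_i/r_i^2$ and $\sum_{\mathrm{block}}D_{j,i}/r_{j,i}^2$ are $o_P(1)$. These are nonnegative, and by exchangeability of the coordinates $\E[D]=n\Var([\bfP]_{11})$; invoking the concentration of the diagonal entries of the projections (an auxiliary estimate of Section \ref{sec_aux_proof_thm}, giving $\Var([\bfP]_{kk})=O(1/n)$ uniformly), the expectations are $O(p/n^2)=o(1)$, whence the sums are $o_P(1)$ by Markov's inequality. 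In particular $\mu_4$ cancels, which explains why the limit is distribution-free.

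It remains to control the martingale remainder $\sum_{\mathrm{full}}(X_i^2-\E[X_i^2\mid\mathcal F_{i-1}])$ and its block analogue; the centred squares are martingale differences for $(\mathcal F_i)$, the block families being carried by disjoint index ranges. Rather than an $L^2$ bound — which would require the eighth moment — I use the von Bahr--Esseen/Burkholder inequality $\E|\sum M_i|^\kappa\lesssim\sum\E|M_i|^\kappa$ with $\kappa=1+(\delta\wedge4)/4\in(1,2]$, so that $\E|M_i|^\kappa\lesssim\E|X_i|^{2\kappa}$ needs only the entry moment of order $4\kappa=4+(\delta\wedge4)\le4+\delta$. A moment bound for quadratic forms then gives $\E|X_i|^{2\kappa}\lesssim r_i^{-\kappa}+r_i^{1-2\kappa}$, whose sum over the $O(p)$ indices is $O(n^{-(\delta\wedge4)/4})=o(1)$. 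Combining the three contributions yields $T_n=\sigma_n^2+o_P(1)$, and dividing by $2\sigma_n\gtrsim1$ proves the lemma. The main obstacle is the distribution-free cancellation of the fourth-moment term, i.e.\ the uniform control of the fluctuations $\Delta_i-r_i^2/n$ of the diagonal of the nested projections under only $(4+\delta)$ moments; this concentration, together with the sub-$L^2$ moment bounds for the quadratic forms, is exactly what the auxiliary results of Section \ref{sec_aux_proof_thm} are designed to supply.
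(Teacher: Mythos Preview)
Your proof is correct and follows essentially the same strategy as the paper: split $T_n$ into the predictable part $\sum\E[X_i^2\mid\mathcal F_{i-1}]-\sum\E[X_{j,i}^2\mid\mathcal F_{i-1}]$ and a martingale remainder, identify the deterministic harmonic sums with $\sigma_n^2$, and show the $(\nu_4-3)$–contribution vanishes. Two points of comparison are worth recording.

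\emph{Martingale remainder.} You control $\sum(X_i^2-\E[X_i^2\mid\mathcal F_{i-1}])$ by a direct $L^\kappa$ bound via von~Bahr--Esseen/Burkholder with $\kappa=1+(\delta\wedge4)/4$, which is self-contained and uses exactly the available $(4+\delta)$th moment. The paper instead observes that $(X_i)$ and $(X_{g(i),i})$ each satisfy a Lindeberg condition and then quotes, from the proof of Corollary~3.1 in Hall--Heyde, the fact that under Lindeberg the sum of squares and the conditional variance are asymptotically equal. Both arguments are valid under the same assumptions; yours is arguably more transparent.

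\emph{Cancellation of the fourth-moment term.} Your decomposition $\Delta_i=r_i^2/n+D_i$ with $D_i=\sum_k([\bfP(i-1)]_{kk}-r_i/n)^2$ is exactly right, and the quantity you need, $\sum_i D_i/r_i^2=o_{\PR}(1)$, is precisely the content of Lemma~\ref{lem_conv_pii_sq} (since $\Delta_i/r_i^2-1/n=D_i/r_i^2$). However, the justification you give --- a uniform bound $\Var([\bfP(i-1)]_{kk})=O(1/n)$ --- is \emph{not} what the auxiliary section actually establishes. The paper proves Lemma~\ref{lem_conv_pii_sq} without any quantitative rate, via the Anatolyev--Yaskov weak law $\tfrac1n\sum_k(p_{kk}-(1-\gamma))^2\conp0$ along subsequences with $i_n/n\to\gamma$, combined with a subsequence argument on $\argmax_i(a_{i,n}-b_{i,n})$. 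Under only a $(4+\delta)$th moment it is not obvious that the individual diagonal entries concentrate at rate $O(1/n)$ uniformly in $i$, so you should cite Lemma~\ref{lem_conv_pii_sq} directly rather than the variance bound.

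Finally, your lower bound on $\sigma_n^2$ via a two-group split works, but the greedy splitting needs a short extra line to guarantee both halves are $\asymp p$ for arbitrary $\eta\in(0,1)$; the paper sidesteps this by quoting the inequality $\sigma_n^2\ge -2(1-\eta)\bigl(\log(1-p/n)+p/n\bigr)$ from \cite{qi_et_al_2019}.
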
 	
		\begin{proof}[\textbf{\upshape Proof of Lemma \ref{lem_conv_x_pow2}.}]  
	Note that for each $n\in\N$, both $(X_i)_{p_1 + 1 \leq i \leq p}$ and 
	$ ( X_{g(i),i} )_{p_1 + 1 \leq i \leq p} $
	form a martingale difference scheme with respect to the filtration  $(\mathcal{F}_i)_{p_1 + 1 \leq i \leq p}$ defined previously. 
	We obtain from the proof of Lemma \ref{lem_conv_x} 
		\begin{align}
		& \sum\limits_{i=p_1+1}^p \E [ X_i^2 | \mathcal{F}_{i-1} ]  
		= \check \sigma_{n,1}^2 + o_{\PR}(1), ~ n\to\infty, \label{var_xi}
	\end{align}
	and
\begin{align}
		& \sum\limits_{i=2}^q \E \left[  X_{g(i),i}^2 \Bigg| \mathcal{F}_{i-1} \right] 
		= \sum\limits_{i=2}^q \sum\limits_{j=p_{i-1}^\star+1}^{p_i^\star} \E [ X_{i,j}^2 | \mathcal{F}_{i-1} ] 
		=  \check \sigma_{n,2}^2 + o_{\PR} (1), ~ n\to\infty, \label{var_xij}
	\end{align}
	where we define 
	\begin{align*}
	\check\sigma_{n,1}^2 & = - 2 \log \lb 1 - \frac{p}{n} \rb 
		+  ( \nu_4 - 3) \sum\limits_{i=p_1+1}^p  \frac{ \tr \lb \bfP(i-1)^{ \odot 2}   \rb  }{ ( n - i + 1) ^2 } ,\\
			\check\sigma_{n,2}^2 & =  -  2  \sum\limits_{i=1}^q   \log \lb 1 - \frac{p_i }{n} \rb 
		+  ( \nu_4 - 3) \sum\limits_{i=p_1+1}^p   \frac{ \tr \lb  \bfP( p_{g(i)-1}^\star+1 ;i-1) ^{\odot 2} \rb  } { ( n - i + 1 + p_{g(i) - 1}^\star)^2 } .
	\end{align*}
	Recalling that $0< \inf_{n\in\N} \min_{1 \leq i \leq q} ( p_i q)/n \leq \sup_{n\in\N} p/n < 1$ and using the inequality $\log(1+x) \leq x$ for $x> -1$, we note that
	\begin{align}
		0 < \inf_{n\in\N} \check \sigma_{n,1}^2 \leq \sup_{n\in\N} \check \sigma_{n,1}^2 < \infty , ~ 
		0< \inf_{n\in\N} \check \sigma_{n,2}^2 \leq \sup_{n\in\N} \check \sigma_{n,2}^2 <\infty .
		\label{bound_sigma_check}
	\end{align}
	Taking a closer look at the proof of \eqref{lindeberg}, we observe that both schemes satisfy the Lindeberg condition, that is, we have for $\varepsilon > 0$
	\begin{align}
	& \sum\limits_{i= p_1 +1}^p \E \left[ \frac{X_{i}^2 }{ \check \sigma_{n,1}^2}  I\{|X_i / \check \sigma_{n,1} | > \varepsilon \}  \right] 
	= o(1),
	\label{lindeberg_xi}  \\
	& \sum\limits_{i= p_1 +1}^p \E \left[ \frac{   X_{g(i),i}^2 }{\check \sigma_{n,2}^2}  I\{|X_{g(i),i} / \check \sigma_{n,2} | > \varepsilon \} \right] 
	= o(1) .
	\label{lindeberg_xij}
	\end{align}
	\ND{By Theorem 2.23 in \cite{hall_heyde}}, we see that \eqref{var_xi}, \eqref{var_xij}, \eqref{lindeberg_xi} and \eqref{lindeberg_xij} imply that the conditional variance can be approximated by the sum of squares, that is, 
	\begin{align*}
		& \sum\limits_{i=p_1+1}^p \frac{  X_i^2 - \E [ X_i^2 | \mathcal{F}_{i-1} ] }{\check\sigma_{n,1}^2} \conp 0, ~ 
		 \sum\limits_{i=p_1+1}^p \frac{ X_{g(i),i}^2 - \E \left[  X_{g(i),i}^2 | \mathcal{F}_{i-1} \right] }{\check\sigma_{n,2}^2} \conp 0, 
		~n\to\infty, 
	\end{align*}
	Combining these observations with \eqref{var_xi}, \eqref{var_xij} and \eqref{bound_sigma_check}, we get
	\begin{align*}
		& \sum\limits_{i=p_1+1}^p  X_i^2	- \check \sigma_{n,1}^2 \conp 0, ~
		\sum\limits_{i=p_1+1}^p  X_{g(i),i}^2	- \check \sigma_{n,2}^2 \conp 0.
	\end{align*}
	Using \eqref{bound_sigma} and $ \check \sigma_{n,1}^2 - \check \sigma_{n,2}^2 = \sigma_n^2 + o_{\PR}(1)$ by Lemma \ref{lem_conv_pii_sq}, the proof of Lemma \ref{lem_conv_x_pow2} concludes. 
\end{proof}

\begin{lemma} \label{lem_conv_log_term}
	Under the assumptions of Theorem \ref{thm}, it holds
		\begin{align}
		   & \sum\limits_{i=p_1+1}^p \frac{ Y_{i} }{\sigma_n} \conp 0, ~ \label{conv_log1}\\
			& \sum\limits_{j=2}^q \sum\limits_{i=p_{j-1}^\star+1}^{p_j^\star} \frac{Y_{j,i}}{\sigma_n} \conp 0, 
			\label{conv_log2}
		\end{align}
		as $n\to\infty$.
	\end{lemma}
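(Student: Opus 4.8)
The guiding observation is that $Y_i$ and $Y_{j,i}$ are exactly the third- and higher-order remainders of $\log(1+x)=x-\tfrac{x^2}{2}+\mathcal{O}(x^3)$, so each ought to be of cubic order in the corresponding centred quadratic form, while that form is itself only of order $n^{-1/2}$. The plan is to make this quantitative and then sum. As a reduction I would first record that $\sigma_n$ is bounded away from zero, so that it suffices to prove $\sum_i Y_i\conp0$ and $\sum_{j,i}Y_{j,i}\conp0$. Writing $h(x)=-\log(1-x)=\sum_{k\ge1}x^k/k$ and using $p=\sum_i p_i$, the $k=1$ contributions cancel and each higher term is nonnegative, whence $\sigma_n^2\ge n^{-2}(p^2-\sum_i p_i^2)\ge(1-\eta)(p/n)^2$; since the assumption $\inf_n\min_i(p_iq)/n>0$ forces $p/n\gtrsim1$, this gives $\sigma_n\gtrsim1$.

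Next I would establish a uniform fractional-moment bound on the $X$-variables. Both families share the same structure: $X_i=(\bfb_i^\top\bfP(i-1)\bfb_i-r_i)/r_i$ with $r_i=n-i+1$, and $X_{j,i}$ has the same form with projection $\bfP(p_{j-1}^\star+1;i-1)$ of rank $r_{j,i}=n-i+1+p_{j-1}^\star$. In every case the rank is $\asymp n$ (because $p/n<1$ uniformly and $p_j\le\eta p$), the projection is measurable with respect to the vectors preceding $\bfb_i$, and $\bfb_i$ is independent of it with i.i.d.\ centred unit-variance entries. Fixing $\ell=\min(3,2+\delta/2)\in(2,3]$, conditioning on that $\sigma$-field and applying the moment inequality for quadratic forms (Bai--Silverstein), together with $\tr\bfP=\tr\bfP^{\ell/2}=\operatorname{rank}\bfP$ for a projection and $\E|b|^{2\ell}\le\E|b|^{4+\delta}<\infty$, I obtain $\E|\bfb_i^\top\bfP(i-1)\bfb_i-r_i|^\ell\lesssim r_i^{\ell/2}$, hence $\E|X_i|^\ell\lesssim r_i^{-\ell/2}\lesssim n^{-\ell/2}$ uniformly in $i$, and identically for $X_{j,i}$.

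I would then combine a Taylor bound on a good event with a union bound for its complement. For $|x|\le\tfrac12$ one has $|\log(1+x)-x+\tfrac{x^2}{2}|\le|x|^3$, so on the event that every $X$-variable is bounded by $\tfrac12$ we get $|Y_i|\le|X_i|^3\le|X_i|^\ell$ (using $|X_i|<1$). Let $G$ be the intersection of these events over all indices in both sums. Each sum has at most $p-p_1<n$ terms, so Markov's inequality yields $\PR(G^c)\le\sum\PR(|X_i|>\tfrac12)\lesssim n\cdot n^{-\ell/2}=n^{1-\ell/2}\to0$ since $\ell>2$. On $G$ we have $|\sum_i Y_i|\le S_n:=\sum_i|X_i|^\ell$ with $\E S_n\lesssim n\cdot n^{-\ell/2}\to0$, so $S_n\conp0$; combined with $\PR(G^c)\to0$ this gives $\sum_i Y_i\conp0$, and the verbatim argument gives $\sum_{j,i}Y_{j,i}\conp0$. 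Dividing by $\sigma_n\gtrsim1$ then yields both \eqref{conv_log1} and \eqref{conv_log2}.

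The main obstacle is the limited $(4+\delta)$th moment: it forbids a direct third-moment estimate of the quadratic forms, which is precisely why I work with the reduced exponent $\ell=\min(3,2+\delta/2)$ and exploit that on $G$ the constraint $|X_i|<1$ lets the genuine cube be dominated by $|X_i|^\ell$. A secondary nuisance is that $\log(1+X_i)$ can blow up when the quadratic form is near zero; I sidestep any estimate of $Y_i$ on the complementary event by controlling only $\PR(G^c)$, and the surplus margin $\ell>2$ is exactly what makes both $\PR(G^c)$ and $\E S_n$ vanish.
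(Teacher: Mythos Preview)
Your argument is correct and follows the paper's own strategy: establish $\sigma_n\gtrsim 1$, derive the fractional-moment bound $\E|X_i|^{2+\delta/2}\lesssim (n-i+1)^{-(1+\delta/4)}\lesssim n^{-(1+\delta/4)}$ from the quadratic-form inequality (Lemma~B.26 in \cite{bai2004}), and use the Taylor remainder to dominate $|Y_i|$ by $|X_i|^{2+\delta/2}$ when $|X_i|$ is small. The one point where you diverge from the paper is the handling of the complementary event: the paper bounds $\E\bigl[|Y_i|\,I\{|X_i|>1-\varepsilon\}\bigr]$ directly and concludes $\sum_i\E|Y_i|\to 0$ (i.e.\ $L^1$-convergence), whereas you avoid any estimate of $Y_i$ on the bad set by passing to the global good event $G=\{\max_i|X_i|\le\tfrac12\}$ and controlling only $\PR(G^c)$. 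This is a harmless variant of the same idea, and it has the minor advantage of sidestepping the behaviour of $\log(1+X_i)$ when $X_i$ is close to $-1$.
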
 	
		\begin{proof}[\textbf{\upshape Proof of Lemma  \ref{lem_conv_log_term}.} ]
	In the following, we will show that the convergence in \eqref{conv_log1} holds true. Then, the assertion \eqref{conv_log2} can be shown similarly. \\ 
	Let $0 < \varepsilon <1$. Then, we estimate for $1 + p_1 \leq i \leq p$ using Taylor's expansion
	\begin{align*}
		\E \left[ | Y_i | I \{ | X_i| \leq 1 - \varepsilon \} \right]
		\lesssim \E \left[ | X_i|^3 I \{ | X_i| \leq 1 - \varepsilon \} \right]
		\lesssim \E | X_i|^{2+\frac{\delta}{2}}.
	\end{align*}
	We also have
	\begin{align*}
		& \E \left[ | Y_i | I \{ | X_i| > 1 - \varepsilon \} \right] 
		\leq  
		\E \left[ | \log(1 + X_i) | I \{ | X_i| > 1 - \varepsilon \} \right]
		+ \E \left[ | X_i | I \{ | X_i| > 1 - \varepsilon \} \right]
		+ \E \left[  X_i^2 I \{ | X_i| > 1 - \varepsilon \} \right]  \\
		\lesssim &   \E \left[ | X_i | I \{ | X_i| > 1 - \varepsilon \} \right]
		+ \E \left[  X_i^2 I \{ | X_i| > 1 - \varepsilon \} \right]  
		\lesssim  \E | X_i|^{2+\frac{\delta}{2}},
	\end{align*}
	where we used the inequality $\log (1 +x ) \leq x $ for all $x > - 1$.
	These two estimates imply
	\begin{align*}
		\sum\limits_{i=p_1 + 1}^p \E | Y_i| \lesssim \sum\limits_{i=p_1 + 1}^p \E | X_i|^{2+\frac{\delta}{2}} =o(1), ~ n\to\infty,
	\end{align*}
	where we used Lemma B.26 in \cite{bai2004} as in the proof of \eqref{lindeberg}. Thus, we obtain
	\begin{align*}
		\sum\limits_{i=p_1 + 1}^p Y_i \conp 0, ~
		n \to\infty,
	\end{align*}
	which implies the assertion of Lemma \ref{lem_conv_log_term} recalling $\inf_{n\in\N} \sigma_n^2 > 0$. 
\end{proof}	
	
	\begin{lemma} \label{lem_conv_pii_sq}
		It holds, as $n\to\infty$, 
		\begin{align}
		 &  \sum\limits_{i=p_1+1}^p  \lb  \frac{ \tr \lb \bfP(i-1)^{ \odot 2}   \rb  }{ ( n - i + 1) ^2 } 
			   - \frac{1}{n} \rb \conp 0,  \label{z10} \\
			    &  \sum\limits_{i=p_1+1}^p  \lb  \frac{ \tr \lb  \bfP( p_{g(i)-1}^\star+1 ;i-1) ^{\odot 2} \rb  } { ( n - i + 1 + p_{g(i) - 1}^\star)^2 } - \frac{1}{n} \rb \conp 0, \label{z20}
		\end{align}
		where the projection matrices are defined in \eqref{def_projection_matrix}. 
	\end{lemma}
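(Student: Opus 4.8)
\emph{Plan.} Write $r_i=n-i+1$ for the (almost sure) rank of $\bfP(i-1)$; continuity of the distribution guarantees that $\bfb_1,\ldots,\bfb_{i-1}$ are linearly independent a.s., so $\tr\bfP(i-1)=r_i$. Since the entries of $\bfX_n$ are i.i.d., the law of $\bfP(i-1)$ is invariant under simultaneous permutations of its rows and columns, whence the diagonal entries $\bfP(i-1)_{jj}$ are identically distributed with common mean $\E[\bfP(i-1)_{jj}]=r_i/n$. The first step is the algebraic identity
\[
\frac{\tr\lb\bfP(i-1)^{\odot2}\rb}{r_i^2}-\frac1n
=\frac{1}{r_i^2}\sum_{j=1}^n\lb\bfP(i-1)_{jj}-\frac{r_i}{n}\rb^2\ge 0,
\]
which follows from $\sum_j\bfP(i-1)_{jj}=r_i$ (the linear cross term cancels exactly). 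Hence the sum in \eqref{z10} is nonnegative, and it suffices to show that its expectation vanishes, i.e.
\[
\sum_{i=p_1+1}^p\frac{n}{r_i^2}\,\Var\lb\bfP(i-1)_{11}\rb\longrightarrow 0 .
\]

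The crux is a bound $\Var(\bfP(i-1)_{jj})\lesssim (i-1)/n^2$, for which I would use a leave-one-out representation. With $B=(\bfb_1,\ldots,\bfb_{i-1})\in\R^{n\times(i-1)}$ we have $\bfI-\bfP(i-1)=B\lb B^\top B\rb\inv B^\top$, so $1-\bfP(i-1)_{jj}=\mathbf a_j^\top\lb B^\top B\rb\inv\mathbf a_j$, where $\mathbf a_j\in\R^{i-1}$ is the $j$-th row of $B$. By the Sherman--Morrison formula,
\[
1-\bfP(i-1)_{jj}=\frac{s_j}{1+s_j},\qquad s_j=\mathbf a_j^\top M\,\mathbf a_j,\quad M=\lb B_{-j}^\top B_{-j}\rb\inv,
\]
with $\mathbf a_j$ \emph{independent} of $M$ (distinct rows of $B$ are independent). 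As $x\mapsto x/(1+x)$ is $1$-Lipschitz on $[0,\infty)$, $\Var(\bfP(i-1)_{jj})\le\Var(s_j)$. Conditioning on $B_{-j}$ and applying the quadratic-form identity \eqref{var_quad_form} gives $\Var(s_j\mid B_{-j})=2\tr M^2+(\nu_4-3)\tr\lb M^{\odot2}\rb$; on the event $G_i$ that $\lambda_{\min}\lb B_{-j}^\top B_{-j}/n\rb$ is bounded away from $0$ one has $\|M\|\lesssim 1/n$, so that $\tr M^2,\ \tr(M^{\odot2})\lesssim (i-1)/n^2$, and a routine linear-spectral-statistics estimate gives $\Var(\tr M)\lesssim (i-1)/n^2$ as well. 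On $G_i^c$ I would use only $0\le\bfP(i-1)_{jj}\le1$, so altogether $\Var(\bfP(i-1)_{jj})\lesssim (i-1)/n^2+\PR(G_i^c)$.

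Putting the pieces together, $\tfrac{n}{r_i^2}\lesssim 1/n$ since $r_i=n-i+1\ge n-p\gtrsim n$ (because $\sup_n p/n<1$), so the main contribution is
\[
\sum_{i=p_1+1}^p\frac{n}{r_i^2}\cdot\frac{i-1}{n^2}
=\frac1n\sum_{i=p_1+1}^p\frac{i-1}{(n-i+1)^2}
\lesssim\frac{p^2}{n^3}\to 0,
\]
while $\sum_i\tfrac{n}{r_i^2}\PR(G_i^c)\lesssim\max_i\PR(G_i^c)\to0$. The claim \eqref{z20} is entirely analogous: there the relevant rank is $\widetilde r_i=n-i+1+p_{g(i)-1}^\star\ge n-\max_k p_k\gtrsim n$ and the number of projected-out (within-group) vectors is $i-1-p_{g(i)-1}^\star\le\max_k p_k$, so the same argument yields $\sum_i\tfrac{n}{\widetilde r_i^2}\cdot\tfrac{i-1-p_{g(i)-1}^\star}{n^2}\lesssim(\max_k p_k)\,p/n^3\le\eta p^2/n^3\to0$.

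The main obstacle is the variance bound, and within it the \emph{uniform} control of the bad event $G_i^c$: with only $(4+\delta)$ moments one must invoke a Bai--Yin type lower bound on the smallest eigenvalue of the companion sample covariance $B_{-j}^\top B_{-j}/n$, valid uniformly because the aspect ratio $(i-1)/(n-1)\le (p-1)/(n-1)$ stays bounded below $1$; this guarantees $\max_i\PR(G_i^c)\to0$ and legitimizes the trace estimates for $M$.
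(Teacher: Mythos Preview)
Your approach is correct in outline and genuinely different from the paper's. Both routes start from the same algebraic identity (nonnegativity of the summands), but then diverge. The paper does \emph{not} bound $\Var(\bfP(i-1)_{jj})$ directly; instead it argues along subsequences: for any $(i_n)$ with $i_n/n\to\gamma\in[0,1)$ it shows $\tfrac{1}{n-i_n+1}\tr\bfP(i_n-1)^{\odot2}-\bigl(1-\tfrac{i_n}{n}\bigr)\conp 0$, invoking Theorem~3.2 of \citet{anatolyev_yaskov_2017} for the case $\gamma>0$ to get $\tfrac1n\sum_j(1-p_{jj}-\gamma)^2\conp0$. It then picks $i_{n_j}$ as the (random) argmax of the summands, extracts a convergent sub-subsequence, and bounds the whole sum by a constant times this maximal term (using that $\sum_i 1/(n-i+1)$ is bounded). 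Your route replaces this black-box concentration result by an explicit leave-one-out/Sherman--Morrison variance bound and a Bai--Yin control of the smallest eigenvalue; the payoff is a quantitative rate ($\lesssim p^2/n^3$) rather than mere $o_\PR(1)$, at the cost of assembling more random-matrix machinery yourself.

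Two points deserve tightening. First, the logical flow ``$\Var(p_{jj})\le\Var(s_j)$, then split $G_i$ vs.\ $G_i^c$'' does not combine cleanly: $s_j$ is unbounded on $G_i^c$, so you cannot control $\Var(s_j)$ there. The clean version is to split $\E[(p_{jj}-r_i/n)^2]$ \emph{first} (using $|p_{jj}-r_i/n|\le1$ on $G_i^c$), and only then apply the Lipschitz comparison $|p_{jj}-r_i/n|\le|s_j-(i-1)/r_i|$ on $G_i$. Second, on $G_i$ you decompose $s_j-(i-1)/r_i=(s_j-\tr M)+(\tr M-(i-1)/r_i)$; the first piece is handled by \eqref{var_quad_form}, but for the second you need not only $\Var(\tr M)\lesssim 1/n^2$ (the ``routine LSS estimate'') but also a bias bound $|\E[\tr M\,\mathbf 1_{G_i}]-(i-1)/r_i|$, which you do not mention. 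Both are obtainable under the $(4+\delta)$-moment assumption, but neither is entirely free; you should at least cite the relevant results.
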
 
		\begin{proof}[\textbf{\upshape Proof of Lemma  \ref{lem_conv_pii_sq}.}]
		As a preparation, we will first show that for any sequence $(i_n)_{n\in\N}$ such that $2 \leq i_n \leq p_n$ for all $n\in\N$ and the limit $\lim_{n\to\infty} i_n / n \in [0,1)$ exists, it holds
		\begin{align} \label{aim}
			 a_{i_n,n} - b_{i_n,n} \conp 0, ~ n\to\infty,
		\end{align}
		where we define for $2 \leq i \leq p$
		\begin{align*}
			a_{i,n} & = \frac{ \tr \lb \bfP(i -1)^{ \odot 2}   \rb  }{  n - i + 1 } , ~ 
			b_{i,n} = \lb 1 - \frac{i}{n} \rb, ~~
			c_n = \sum_{i=p_1 + 1}^p \frac{ a_{i, n} - b_{i,n} }{n - i +1}.
		\end{align*}
		In the following, we denote the diagonal entries of $\bfP(i_n -1)$ by $p_{ii}$ ($1 \leq i \leq n$). 
		First, we consider the case $\lim_{n\to\infty} i_n / n = 0.$ For this case, we note that
	\begin{align*} 
		\frac{\tr \lb \bfP(i_n - 1)^{\odot 2} \rb}{n} 
		& = \frac{1}{n} \sum\limits_{i=1}^n \lb 1 - p_{ii}\rb^2
		- 1 + \frac{2}{n} \sum\limits_{i=1}^n p_{ii}
		=  \frac{2 ( n - i_n + 1 ) }{n} - 1 + o_{\textnormal{Pr}}(1) \nonumber \\
		& = 1 + o_{\textnormal{Pr}}(1) 
		,~n\to\infty, 
	\end{align*}
	where we used
	\begin{align*}
		\frac{1}{n} \sum\limits_{i=1}^n \E (1 - p_{ii})^2 \leq 
		\frac{1}{n} \sum\limits_{i=1}^n \E [1 - p_{ii}] = \frac{1}{n} \tr ( \mathbf{I} - \mathbf{P} ) = \frac{i_n- 1}{n} = o(1), ~n\to\infty. 
	\end{align*}
	In this case, we conclude
	\begin{align*}
		a_{i_n,n} - b_{i_n,n} = \frac{\tr \lb \bfP(i_n - 1)^{\odot 2} \rb }{n} - 1  + o_{\PR}(1) = o_{\PR}(1), ~n\to\infty.
	\end{align*}
	Now consider the case $\lim_{n\to\infty} i_n / n = \gamma \in (0,1)$. 
	Then we have from Theorem 3.2 in \cite{anatolyev_yaskov_2017}
	\begin{align*}
		\frac{1}{n} \sum\limits_{i=1}^n (1 - p_{ii} - \gamma)^2 \conp 0, ~ n\to\infty,
\end{align*}	 
	which implies
	\begin{align*}
		\frac{\tr \lb \bfP(i_n - 1)^{\odot 2} \rb}{n} 
		& =  \frac{1}{n} \sum_{i=1}^n ( 1 - p_{ii} - \gamma)^2 
		- (1-\gamma)^2 + \frac{2 ( 1 -\gamma )}{n} \sum\limits_{i=1}^n p_{ii} \\
		& = \frac{2 ( 1 -\gamma ) ( n - i_n + 1) }{n} - (1 - \gamma)^2 + o_{\textnormal{Pr}}(1) 
		 = ( 1 - \gamma)^2 + o_{\textnormal{Pr}}(1), ~ n\to\infty,
	\end{align*}
		which implies that \eqref{aim} holds also true in this case. We continue with a proof of \eqref{z10} by showing that any subsequence of $(c_n)_{n\in\N}$ admits a further subsequence converging in probability to $0$.	
		Let $(c_{n_j})_{j\in\N}$ be an arbitrary subsequence of $(c_n)_{n\in\N}$. We choose
			\begin{align*} 
			 i_{n_j} \in \argmax_{p_1 + 1 \leq i \leq p} \lb a_{i,n_j}  - b_{i,n_j} \rb.
			 \end{align*} 		
			 Not that there exists a subsequence $(i_{n_{j_k}} )_{k\in\N}$ of $(i_{n_j})_{j\in\N}$ 
			 which admits a limit $\lim_{k\to\infty} i_{n_{j_k}}/k \in  [0,1)$ (that is, this subsequence satisfies the assumption for \eqref{aim}).
		Then, it holds using \eqref{aim}
		\begin{align*}
			c_{n_{j_k}} 
			\lesssim \max\limits_{p_1 +1 \leq i \leq p} \lb a_{i,n_{j_k}}  - b_{i,n_{j_k}} \rb
			= a_{i_{n_{j_k}},n_{j_k}}  - b_{i_{n_{j_k}},n_{j_k}} \conp 0,~ n\to\infty.
		\end{align*} 
		This implies the convergence $ c_n \conp 0$ of the whole sequence $(c_n)_{n\in\N}$ for $n\to\infty$ and thus, the convergence in \eqref{z1} holds true. The second assertion \eqref{z20} of Lemma \ref{lem_conv_pii_sq} can be shown similarly.  
		\end{proof}

	\begin{lemma} \label{lem_conv_sigma2i}
	It holds 
	\begin{align*}
			\sum\limits_{i=p_1 + 1}^p \sigma_{n,2,i}^2 \conp 0, ~ n\to\infty,
		\end{align*}
	where the term $\sigma_{n,2,i}^2$ is defined in \eqref{def_sigma_2}.
	\end{lemma}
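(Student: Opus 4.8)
The plan is to first turn the three traces in \eqref{def_sigma_2} into a single sum of squares and thereby make the statement a pure upper bound. Recalling that for the Hadamard square one has $\tr(\mathbf{A}^{\odot 2}) = \sum_{k=1}^n A_{kk}^2$, write $p_{kk}$ and $\tilde p_{kk}$ for the diagonal entries of $\bfP(i-1)$ and $\bfP(p_{g(i)-1}^\star+1;i-1)$, and abbreviate $m_1 = n-i+1$ and $m_2 = n-i+1+p_{g(i)-1}^\star$. A direct expansion then gives the identity
\begin{align*}
\sigma_{n,2,i}^2 = \sum_{k=1}^n \left( \frac{p_{kk}}{m_1} - \frac{\tilde p_{kk}}{m_2}\right)^2 \geq 0 ,
\end{align*}
so in particular $\sigma_{n,2,i}^2$ is nonnegative and there is no need to track delicate cancellations between the individual traces.

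Next I would center each diagonal entry at its mean. Since $\tr \bfP(i-1) = m_1$ and $\tr \bfP(p_{g(i)-1}^\star+1;i-1) = m_2$, the averages of $p_{kk}$ and $\tilde p_{kk}$ over $k$ are $m_1/n$ and $m_2/n$, and the scaled means coincide, $\tfrac{m_1/n}{m_1} = \tfrac1n = \tfrac{m_2/n}{m_2}$, so the constant parts drop out:
\begin{align*}
\frac{p_{kk}}{m_1} - \frac{\tilde p_{kk}}{m_2} = \frac{p_{kk} - m_1/n}{m_1} - \frac{\tilde p_{kk} - m_2/n}{m_2}.
\end{align*}
Applying $(x-y)^2 \le 2x^2 + 2y^2$ yields $\sigma_{n,2,i}^2 \le \tfrac{2}{m_1^2} S_i + \tfrac{2}{m_2^2} U_i$ with the centered diagonal sums $S_i = \sum_k (p_{kk} - m_1/n)^2$ and $U_i = \sum_k (\tilde p_{kk} - m_2/n)^2$. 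Because $\sup_n p/n < 1$ forces $m_1, m_2 \gtrsim n$, summing over $i$ gives $\sum_{i=p_1+1}^p \sigma_{n,2,i}^2 \lesssim n^{-2} \sum_{i=p_1+1}^p (S_i + U_i)$.

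It then remains to show that the centered diagonal sums are negligible. Both $\bfP(i-1)$ and $\bfP(p_{g(i)-1}^\star+1;i-1)$ are projections onto orthogonal complements of spans of i.i.d.\ columns, whose numbers divided by $n$ stay in a compact subinterval of $[0,1)$ by the dimension assumptions. Hence, exactly as in the proof of Lemma \ref{lem_conv_pii_sq} (via Theorem 3.2 in \cite{anatolyev_yaskov_2017} in the regime of a positive limiting ratio, and the elementary first-moment bound $\frac1n\sum_k \E(1-p_{kk}) = (i-1)/n$ when the ratio tends to $0$), the normalized fluctuations concentrate: $n^{-1} S_{i_n} \conp 0$ and $n^{-1} U_{i_n} \conp 0$ along any sequence $i_n$ with $i_n/n$ convergent. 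Upgrading this to uniformity in $i$ by the same argmax-plus-subsequence device already used in Lemma \ref{lem_conv_pii_sq} gives $\max_{p_1+1\le i\le p} n^{-1} S_i \conp 0$ and likewise for $U_i$. Since there are at most $p \le n$ summands,
\begin{align*}
\frac{1}{n^2}\sum_{i=p_1+1}^p (S_i + U_i) \le \max_{p_1+1\le i\le p}\frac{S_i}{n} + \max_{p_1+1\le i\le p}\frac{U_i}{n} \conp 0 ,
\end{align*}
which together with the bound of the previous paragraph finishes the proof.

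The main obstacle is this last step: establishing the concentration of the diagonal entries of the two families of projection matrices and, more delicately, making it uniform over the $\mathcal{O}(n)$ indices $i$ rather than merely pointwise. This is precisely where Theorem 3.2 in \cite{anatolyev_yaskov_2017} and the subsequence argument of Lemma \ref{lem_conv_pii_sq} carry the load. The algebraic reduction in the first two paragraphs is what makes Lemma \ref{lem_conv_pii_sq} directly reusable here, since after centering the only random quantities that survive are exactly the centered diagonal sums controlled there; it is also this vanishing of $\sum_i \sigma_{n,2,i}^2$ that removes the fourth-moment factor $\nu_4 - 3$ from the limiting variance and yields the claimed distributional universality.
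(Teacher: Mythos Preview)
Your proof is correct and follows the same strategy as the paper's: center the diagonal entries of the two projections (using $\tr\bfP(i-1)=m_1$ and $\tr\bfP(p_{g(i)-1}^\star+1;i-1)=m_2$) and reduce everything to the convergences \eqref{z10}--\eqref{z20} of Lemma~\ref{lem_conv_pii_sq}. The only packaging difference is that you start from the perfect-square identity $\sigma_{n,2,i}^2=\sum_k\bigl(p_{kk}/m_1-\tilde p_{kk}/m_2\bigr)^2$ and then apply $(x-y)^2\le 2x^2+2y^2$, whereas the paper keeps the three traces separate, handles the two pure squares directly by Lemma~\ref{lem_conv_pii_sq}, and bounds the cross term $\sum_i\bigl(\tr(\mathbf{A}_i\odot\mathbf{B}_i)-1/n\bigr)$ via two applications of Cauchy--Schwarz. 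One simplification worth noting: after your bound $\sigma_{n,2,i}^2\le 2S_i/m_1^2+2U_i/m_2^2$ the $\max$--subsequence detour is unnecessary, since $S_i/m_1^2=\tr\bigl(\bfP(i-1)^{\odot 2}\bigr)/m_1^2-1/n$ and $U_i/m_2^2=\tr\bigl(\bfP(p_{g(i)-1}^\star+1;i-1)^{\odot 2}\bigr)/m_2^2-1/n$, so $\sum_i S_i/m_1^2$ and $\sum_i U_i/m_2^2$ are exactly the sums appearing in \eqref{z10} and \eqref{z20} and Lemma~\ref{lem_conv_pii_sq} applies directly.
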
 
		\begin{proof}[\textbf{\upshape Proof of Lemma \ref{lem_conv_sigma2i}.}]
	 	Recalling the definition of $\sigma_{n,2,i}^2$ and using Lemma \ref{lem_conv_pii_sq}, it suffices to show
	 	\begin{align*}
	 		\sum\limits_{i= p_1 + 1}^p \lb \tr \mathbf{A}_i \odot \mathbf{B}_i - \frac{1}{n} \rb 
	 		\conp 0, ~ n\to\infty,
	 	\end{align*}
	 	where we denote
	 	\begin{align*}
	 		\mathbf{A}_i = \frac{\bfP(i-1)}{n - i + 1}, ~~
			\mathbf{B}_i = \frac{   \bfP( p_{g(i)-1}^\star+1 ;i-1)   } {  n - i + 1 + p_{g(i) - 1}^\star }, ~
			p_1 + 1 \leq i \leq p. 
	 	\end{align*}
	 	Note that $\tr \mathbf{A}_i = \tr \mathbf{B}_i = 1$ for all $p_1 + 1 \leq i \leq p$. This gives
	 	\begin{align*}
	 		 \sum\limits_{i= p_1 + 1}^p \lb \tr \mathbf{A}_i \odot \mathbf{B}_i - \frac{1}{n} \rb 
	 		 & = 
	 		  \sum\limits_{i= p_1 + 1}^p  \tr \left\{ \lb \mathbf{A}_i - \frac{1}{n} \mathbf{I} \rb \odot \lb \mathbf{B}_i - \frac{1}{n} \mathbf{I} \rb \right\} 
	 		   \leq \sum\limits_{i= p_1 + 1}^p \left\{  \tr  \lb \mathbf{A}_i - \frac{1}{n} \bfI \rb^{\odot 2} \tr  \lb \mathbf{B}_i - \frac{1}{n} \bfI \rb^{\odot 2} \right\}^{\frac{1}{2}}  \\
	 		  & \leq \left\{ \sum\limits_{i= p_1 + 1}^p   \tr  \lb \mathbf{A}_i - \frac{1}{n} \bfI \rb^{\odot 2} \sum\limits_{i= p_1 + 1}^p \tr  \lb \mathbf{B}_i - \frac{1}{n} \bfI \rb^{\odot 2} \right\}^{\frac{1}{2}}  
	 		   = \left\{ \sum\limits_{i= p_1 + 1}^p   \lb \tr   \mathbf{A}_i^{\odot 2}  - \frac{1}{n} \rb \sum\limits_{i= p_1 + 1}^p \lb  \tr   \mathbf{B}_i^{\odot 2}  - \frac{1}{n} \rb \right\}^{\frac{1}{2}}  \\ 
	 		  & \conp 0, 
	 	\end{align*}
	 	as $n\to\infty$, where we applied the Cauchy-Schwarz inequality twice and Lemma \ref{lem_conv_pii_sq}.  
	 	\end{proof}
	 	
	 		\begin{proof}[\textbf{\upshape Proof of Theorem \ref{thm_eq_cov}.}	]
		For proving Theorem \ref{thm_eq_cov}, we need the following properties of the variance.
\begin{lemma} \label{lem_sigma_bound_eq_cov}
	 Under the assumptions of Theorem \ref{thm_eq_cov},
	 we have 
	 \begin{align*}
	 	0 < \inf_{n\in\N} \sigma_n^2 \leq \sup_{n\in\N} \sigma_n^2 < \infty,
	 \end{align*}
	 where $\sigma_n^2$ denotes the variance defined in \eqref{def_sigma_eq_cov}. 
\end{lemma}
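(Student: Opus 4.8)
The plan is to reduce both bounds to elementary estimates via the substitution $x_n := p/n$ and $a_j := n_j/n$ for $1 \le j \le q$. Since $n = \sum_{j=1}^q n_j$, these weights satisfy $a_j \in (0,1)$ and $\sum_{j=1}^q a_j = 1$, and one has $p/n_j = x_n/a_j$. Writing $f(t) := -\log(1-t)$, the variance in \eqref{def_sigma_eq_cov} becomes
\[
  \sigma_n^2 = -f(x_n) + \sum_{j=1}^q a_j^2\, f\lb x_n/a_j \rb .
\]
By assumption $p/n \le p/n_j$ (because $n \ge n_j$), so both $x_n$ and every $x_n/a_j = p/n_j$ stay in a compact subinterval $[\underline\gamma, \bar\gamma] \subset (0,1)$ with $\underline\gamma := \inf_{n\in\N} p/n > 0$ and $\bar\gamma := \sup_{n\in\N} \max_{1\le j\le q} p/n_j < 1$; in particular all logarithms are finite.

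For the upper bound I would discard the nonpositive term $-f(x_n) = \log(1 - p/n) \le 0$ and bound each factor $f(x_n/a_j) = -\log(1 - p/n_j) \le -\log(1 - \bar\gamma)$. Combined with $\sum_{j=1}^q a_j^2 \le \lb \sum_{j=1}^q a_j \rb^2 = 1$, this gives $\sigma_n^2 \le -\log(1 - \bar\gamma) < \infty$, uniformly in $n$.

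The lower bound is the heart of the matter, because $\sigma_n^2$ is a priori a difference between a negative quantity and a positive one, so its positivity is not visible at a glance. Here I would expand $f$ in its Maclaurin series $f(t) = \sum_{k \ge 1} t^k/k$, valid on $(-1,1)$. Substituting and interchanging the order of summation (legitimate by Tonelli's theorem, since all terms are nonnegative once $x_n > 0$ and $a_j > 0$) yields
\[
  \sum_{j=1}^q a_j^2\, f\lb x_n/a_j \rb = \sum_{k \ge 1} \frac{x_n^k}{k} \sum_{j=1}^q a_j^{2-k},
\]
and subtracting $f(x_n) = \sum_{k \ge 1} x_n^k/k$ term by term gives
\[
  \sigma_n^2 = \sum_{k \ge 1} \frac{x_n^k}{k} \lb \sum_{j=1}^q a_j^{2-k} - 1 \rb .
\]
The decisive point is that the linear term ($k=1$) cancels exactly, because $\sum_{j=1}^q a_j = 1$; this is precisely where the identity $n = \sum_j n_j$ enters. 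For every $k \ge 2$ the exponent $2 - k$ is nonpositive and $a_j \le 1$, so $a_j^{2-k} \ge 1$ and hence $\sum_{j=1}^q a_j^{2-k} - 1 \ge q - 1 \ge 1$. Thus every remaining summand is nonnegative, and retaining only $k = 2$ leaves
\[
  \sigma_n^2 \ge \frac{x_n^2}{2}\,(q-1) \ge \frac{1}{2}\, \underline\gamma^2 > 0 .
\]

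The main obstacle is exactly this lower bound: the raw expression for $\sigma_n^2$ carries no sign information, and it is the power-series bookkeeping that reveals both the cancellation of the dominant linear term and the positivity of the residual series. The only technical care required is the justification of the summation interchange, which is immediate from nonnegativity; beyond that, the argument rests solely on the elementary facts $a_j \le 1$ and $\sum_j a_j = 1$.
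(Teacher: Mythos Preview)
Your proof is correct and reaches the same final lower bound $\sigma_n^2 \ge \tfrac{1}{2}(q-1)(p/n)^2$ as the paper, but the packaging differs. The paper introduces the auxiliary function $\eta(x) = -\bigl(\log(1-x)+x\bigr)/x^2$, rewrites $n^2\sigma_n^2 = p^2\bigl\{\sum_j \eta(p/n_j) - \eta(p/n)\bigr\}$, and then invokes the monotonicity of $\eta$ together with $\eta\ge 1/2$ to compare every term to $\eta(p/n_{\max})$. Your Maclaurin expansion achieves the same cancellation of the linear term (this is exactly what the ``$+x$'' in the definition of $\eta$ encodes) and then establishes term-by-term positivity directly, without needing the monotonicity claim as a separate ingredient. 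In that sense your argument is slightly more self-contained; the paper's version, on the other hand, isolates the analytic content in a single reusable property of $\eta$. The upper bound is handled identically in both proofs.
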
	
	\begin{proof}[\textbf{\upshape Proof of Lemma \ref{lem_sigma_bound_eq_cov}. } ]
	Define the functions 
	\begin{align*}
		\xi(x) = - \lb \log (1 -x ) + x \rb, ~
		\eta(x) = \frac{\xi(x) }{x^2},
	\end{align*}
	where $x\in(0,1)$. Note that $\eta$ is a monotone increasing function with $\eta(x) \geq 1/2$.
	Using $\sum_{j=1}^q n_j = n$ and the definition $n_{\max} = \max_{1 \leq j \leq q} n_j$, we obtain the estimate 
	\begin{align*}
		n^2 \sigma_n^2 
		& =
		 \sum\limits_{j=1}^q n_j^2 \xi \lb \frac{p}{n_j} \rb 
		- n^2 \xi \lb \frac{p}{n} \rb 
	 = p^2 \left\{ \sum\limits_{j=1}^q \eta \lb \frac{p}{n_j } \rb - \eta \lb \frac{p}{n} \rb \right\} 	\\
	 & \geq  p^2 \left\{ q \eta \lb \frac{p}{n_{max}} \rb - \eta \lb \frac{p}{n_{\max}} \rb  \right\}  = p^2 ( q - 1) \eta \lb \frac{p}{n_{max}} \rb.
	\end{align*}
	Using $\inf_{n\in\N} p/n >0$, we conclude $\inf_{n\in\N} \sigma_{n}^2 > 0$. 
	Moreover, from our assumption on the dimension-to-subsample-size ratios $p/n_j$,  we obtain
	\begin{align*}
		\sup_{n\in\N} \sigma_n^2 \leq \sup_{n\in\N} \max\limits_{1 \leq j \leq q} \lb - \log \lb 1 - \frac{p}{n_j} \rb \rb 
		< \infty. 
	\end{align*}
	This concludes the proof of Lemma \ref{lem_sigma_bound_eq_cov}. 
	\end{proof}
	Continuing with the proof of Theorem \ref{thm_eq_cov}, we follow the same strategy as in the proof of Theorem \ref{thm} and concentrate on discussing the main steps. 
	Recalling the definition \eqref{def_statistic_2} of the likelihood ratio test statistic, we note that
	under the null hypothesis \eqref{null_eq_cov}
	\begin{align*}
		2 \log \Lambda_{n,1} 
		= &  \sum\limits_{j=1}^q n_j \log | \mathbf{A}_j | - n \log |\mathbf{A}| 
		+ p n \log n - \sum\limits_{j=1}^q n_j p \log n_j \\
		= & \sum\limits_{j=1}^q n_j \log | n_j \hat{\mathbf{I}}_j | - n \log |n \hat{\mathbf{I}}| 
		+ p n \log n - \sum\limits_{j=1}^q n_j p \log n_j ,
	\end{align*}
	where $\hat{\mathbf{I}}$ is defined in the proof of Theorem \ref{thm} and 
	\begin{align*}
		\hat{\mathbf{I}}_j = & \frac{1}{n_j} \sum\limits_{k=1}^{n_j} \bfx_{jk} \bfx_{jk}^\top. 
	\end{align*}
	Applying the QR-procedure to the matrices $\hat{\mathbf{I}}$ and $\hat{\mathbf{I}}_j$ ($1 \leq j \leq q$), we obtain for their determinants
	\begin{align*}
		| n \hat{\mathbf{I}} | = \prod\limits_{i=1}^p \bfb_i ^\top \bfP(i-1) \bfb_i, ~
		| n_j \hat{\mathbf{I}}_j | = \prod\limits_{i=1}^p \bfb_{ji}^\top \bfP (j; i-1) \bfb_{ji} ,
	\end{align*}
	where
\ND{ 	\begin{align*}
		\bfb_i = & \lb \bfb_{1i}^\top, \ldots, \bfb_{qi}^\top \rb ^\top \in \R^n, 
		\bfb_{ji} \in\R^{n_j}, ~ 1 \leq j \leq q, 
	\end{align*} }
	and throughout this proof, $\bfP(j;i-1)\in \R^{n_j \times n_j}$ denotes the projection matrix on the orthogonal complement of $$\textnormal{span}\{ \bfb_{j1} , \ldots, \bfb_{j,i-1}\}$$ for $1 \leq i \leq p,~ 1\leq j \leq q$ (note that we have a different definition than in the proof of Theorem \ref{thm}). We set $\bfP(j; 0) = \mathbf{I}\in \R^{n_j \times n_j}$. 
The remaining quantities are defined as in the proof of Theorem \ref{thm}. 
With a slight abuse of notation, we define for $1 \leq i \leq p,~ 1\leq j \leq q$
\begin{align*}
	X_i  =  & \frac{ \bfb_i^\top \bfP(i-1) \bfb_i - (n - i + 1 ) }{ n - i +1}, ~
		X_{j,i}  =  \frac{ \bfb_{ji}^\top \bfP(j;i-1) \bfb_{ji} - (n_j - i + 1 ) }{ n_j - i + 1 },
		\\ 
		Y_i = & \log ( 1 + X_i) - \lb X_i - \frac{X_i^2}{2} \rb, ~ 
		Y_{j,i} =  \log ( 1 + X_{j,i}) - \lb X_{j,i} - \frac{X_{j,i}^2}{2} \rb.
		\end{align*}
		Similarly to \eqref{cal_mu}, we obtain using Stirling's formula
		\begin{align}
			& \sum\limits_{i=1}^p \sum\limits_{j=1}^q n_j \log ( n_j - i + 1) - n \sum\limits_{i=1}^p \log ( n - i + 1 ) 
			+ pn\log n - \sum\limits_{j=1}^q n_j p \log n_j \nonumber \\
			= & \sum\limits_{j=1}^q n_j \log \lb \frac{n_j ! }{(n_j - p )! } \rb 
			- n \log \lb \frac{n!}{(n - p)! } \rb 
			+ pn\log n - \sum\limits_{j=1}^q n_j p \log n_j 
			\nonumber \\ 
			= & n \lb n - p + \frac{1}{2} \rb \log \lb 1 - \frac{p}{n} \rb - \sum\limits_{j=1}^q 
			n_j \lb n_j - p + \frac{1}{2} \rb \log \lb 1 - \frac{p}{n_j} \rb 
			 + \sum_{j=1}^q \frac{n_j}{12} \lb \frac{1}{n_j} - \frac{1}{n_j - p} \rb 
			+ o(n) \nonumber \\
				= & n \lb n - p + \frac{1}{2} \rb \log \lb 1 - \frac{p}{n} \rb - \sum\limits_{j=1}^q 
			n_j \lb n_j - p + \frac{1}{2} \rb \log \lb 1 - \frac{p}{n_j} \rb 
			 - \sum_{j=1}^q \frac{p}{12 ( n_j - p) } 
			+ o(n) \nonumber \\
			= & n \lb n - p + \frac{1}{2} \rb \log \lb 1 - \frac{p}{n} \rb - \sum\limits_{j=1}^q 
			n_j \lb n_j - p + \frac{1}{2} \rb \log \lb 1 - \frac{p}{n_j} \rb 
			+ o(n) \nonumber \\ 
			& =  \mu_n 
            + n  \log \lb 1 - \frac{p}{n} \rb 
            - \sum\limits_{j=1}^q 
			n_j  \log \lb 1 - \frac{p}{n_j} \rb  
            - \frac{1}{2}(\nu_4 -3) p(1-q)
 + o(n), ~ n\to\infty.
 \label{z4}
		\end{align}
	
		For \eqref{z4}, we used that under the assumptions of Theorem \ref{thm_eq_cov}
		\begin{align}
			\sum_{j=1}^q \frac{p}{12 n ( n_j - p) } \lesssim \frac{q}{n} \leq \frac{1}{\min_j n_j }  =o(1), ~n\to\infty. \label{z5}
		\end{align}
	Consequently, we may decompose
	\begin{align*}
		& 2 \lb \log \Lambda_{n,1} - \mu_n \rb 
		\\ = & \sum\limits_{j=1}^q \sum\limits_{i=1}^{p} n_j X_{j,i} 
		-  n \sum\limits_{i=1}^p X_i 
		-  \lb    \sum\limits_{j=1}^q  \sum\limits_{i=1}^{p}n_j \frac{X_{j,i}^2}{2} 
		- n \sum\limits_{i=1}^p \frac{X_i^2}{2} 
		- \tau_n  
		\rb 
		 +  \sum\limits_{j=1}^q   \sum\limits_{i=1}^{p} n_jY_{j,i} 
		- n \sum\limits_{i=1}^p Y_i 
		+ o(n), ~n\to\infty, 
	\end{align*}
     where
		\begin{align}
			\tau_n = & 2 \left\{   n \log \lb 1 - \frac{p}{n} \rb - \sum\limits_{j=1}^q n_j \log \lb 1 - \frac{p}{n_j} \rb \right\} + (\nu_4 -3) p(q-1).
			\nonumber 
		\end{align}
Note that $(W_i)_{1 \leq i \leq p}$ with
\begin{align*}
	W_i = \sum\limits_{j=1}^q n_j X_{j,i} - n X_i  , ~ 1 \leq i \leq p,
\end{align*}
forms a martingale difference scheme with respect to the filtration $(\mathcal{A}_i)_{1 \leq i \leq p}$, where the $\sigma$-field $\mathcal{A}_i$ is generated by the random variables $\bfb_1, \ldots, \bfb_{i}$ for $1 \leq i \leq p$. One can show that 
	\begin{align} \label{z1}
		\frac{\sum\limits_{j=1}^q  \sum\limits_{i=1}^{p}n_j \frac{X_{j,i}^2}{2} 
		- n \sum\limits_{i=1}^p \frac{X_i^2}{2} 
		- \tau_n }{n \sigma_n}
		\conp 0, ~ n \to\infty, 
	\end{align}
	 and
	\begin{align} \label{z2}
		\frac{ \sum\limits_{j=1}^q   \sum\limits_{i=1}^{p} n_jY_{j,i} 
		- n \sum\limits_{i=1}^p Y_i }{n\sigma_n} \conp 0,~ n\to\infty.
	\end{align}
 For the sake of brevity, we omit the proofs of \eqref{z1} and \eqref{z2} as they are very similar to the proofs of Lemma \ref{lem_conv_x_pow2} and Lemma \ref{lem_conv_log_term}. For \eqref{z1}, we additionally note that (similarly to Lemma \ref{lem_conv_sigma2i})
 \begin{align*}
     n \sum_{i=1}^p \frac{\tr \bfP (i-1) ^{\odot 2}}{(n - i +1)^2} - \sum_{j=1}^q n_j \sum_{i=1}^p \frac{\tr \bfP (j;i-1) ^{\odot 2}}{(n_j - i +1)^2}
     = \frac{np}{n} - \sum_{j=1}^q \frac{n_j p}{n_j} + o_{\PR}(n)
     = p - qp + o_{\PR}(n) = p(1-q) + o_{\PR}(n) .
 \end{align*}
 
 We continue with a proof of the asymptotic normality of the scheme $(W_i / (n\sigma_n))_{1 \leq i \leq p}. $ To begin with, we show that
\begin{align}
	\sum\limits_{i=1}^p \E  \left[ \lb \frac{W_i}{n \sigma_n}\rb^2 \Bigg| \mathcal{A}_{i-1} \right]
	= 1 + o_{\PR} (1), ~ n\to\infty. \label{conv_var_2}
\end{align}
As a preparation for \eqref{conv_var_2}, note that 
\begin{align*}
	&  \sum\limits_{i=1}^p \sum\limits_{j=1}^q n_j n \E [ X_i X_{j,i} | \mathcal{A}_{i-1} ] 
	 =  \sum\limits_{i=1}^p \sum\limits_{j=1}^q n n_j \left\{ \sum\limits_{\substack{k = n_{j-1}^\star + 1}}^{n_j^\star} ( \nu_4 - 3)\frac{ \lb \bfP(j; i-1) \rb_{kk} \lb \bfP(i-1) \rb_{kk} }{(n - i + 1) (n_j - i + 1) } 
	+ 2 \frac{ \tr \lb \bfP(j-1) \tilde{ \bfP } ( j; i- 1 ) \rb }{ ( n - i + 1) (n_j - i + 1)} \right\}  \\
	& = 	 \sum\limits_{i=1}^p \sum\limits_{j=1}^q n n_j \left\{  ( \nu_4 - 3)\frac{ \tr \lb \tilde \bfP(j; i-1)\odot \bfP(i-1) \rb }{(n - i + 1) (n_j - i + 1) } 
	+ 2 \frac{ \tr  \tilde{ \bfP } ( j; i- 1 ) }{ (n_j - i  +1) (n - i + 1)} \right\}  \\
	& = ( \nu_4 - 3) \sum\limits_{i=1}^p \sum\limits_{j=1}^q n n_j  \frac{ \tr \lb \tilde \bfP(j; i-1) \odot \bfP(i-1) \rb }{(n - i + 1) (n_j - i + 1) } 
	+ \sum\limits_{i=1}^p \frac{2 n^2 }{(n - i + 1)}, \\
\end{align*}
where $\tilde{\bfP}(j;i-1) $ denotes the $(n\times n)$ dimensional embedded matrix of $\bfP(j; i -1) \in \R^{n_j \times n_j}$, that is, 
\begin{align*}
	\lb \tilde \bfP(j;i-1) \rb_{kl} 
	=
	\begin{cases} 
	 \lb  \bfP(j;i-1) \rb_{kl},  &  n_{j-1}^\star + 1 \leq k,l \leq n_j^\star , \\
	 0 ,& \textnormal{else} ,
	 \end{cases}
\end{align*}
for $1 \leq k , l \leq n, ~ 1 \leq j \leq q, ~ 1 \leq i \leq p.$
In order to prove \eqref{conv_var_2}, we calculate 
	\begin{align*}
		\sum\limits_{i=1}^p \E  \left[ W_i^2 \big| \mathcal{A}_{i-1} \right]
		= & n^2 \sum\limits_{i=1}^p \E [ X_i^2 | \mathcal{A}_{i-1} ] 	
		+ \sum\limits_{i=1}^p \sum\limits_{j=1}^q n_j^2 \E [ X_{j,i}^2 | \mathcal{A}_{i-1} ]
		- 2 \sum\limits_{i=1}^p \sum\limits_{j=1}^q n_j n \E [ X_i X_{j,i} | \mathcal{A}_{i-1} ] 
		\\
		= & 	 2 \sum\limits_{j=1}^q \sum\limits_{i=1}^p n_j^2 \frac{1}{n_j - i + 1} 
		- 2 n^2 \sum\limits_{i=1}^p \frac{1}{n - i + 1}
		  + \tilde{\sigma}_{n,2}^2 ,
	\end{align*}
		where we used the fact $\E [ X_{j,i} X_{j',i} | \mathcal{A}_{i-1} ] = 0$ for different groups $j,j' \in \{1, \ldots, q\}, ~ j\neq j' $ and \eqref{var_quad_form} and we define
	\begin{align*}
		  \tilde{\sigma}_{n,2}^2 = & ( \nu_4 - 3 ) \Bigg\{ n^2 \sum\limits_{i=1}^p \frac{ \tr \bfP(i-1)^{\odot 2} }{ (n - i +1)^2 } 
		+ \sum\limits_{i=1}^p \sum\limits_{j=1}^q n_j^2 \frac{ \tr \bfP(j; i-1)^{\odot 2} }{(n_j - i + 1)^2}  
		 - 2 \sum\limits_{i=1}^p \sum\limits_{j=1}^q n n_j \frac{ \tr \lb \tilde  \bfP(j; i-1) \odot \bfP(i-1) \rb }{(n - i + 1) (n_j - i + 1) } \Bigg\}.
	\end{align*}
	Note that $\tilde \sigma_{n,2}^2 / n^2 = o_{\PR} (1)$ as $n\to\infty$ (similarly to Lemma \ref{lem_conv_pii_sq} and  Lemma \ref{lem_conv_sigma2i}).  
	Using \eqref{harmonic_series} and \eqref{z5}, we obtain
	\begin{align*}
		\sum\limits_{i=1}^p \E  \left[ \lb \frac{W_i}{n}\rb^2 \big| \mathcal{A}_{i-1} \right]
		= &  \sum\limits_{i=1}^p \log \lb 1 - \frac{p}{n} \rb 
		- \sum\limits_{i=1}^p \sum\limits_{j=1}^q \lb \frac{n_j}{n} \rb^2 \log \lb 1 - \frac{p}{n_j} \rb 
		 + \sum\limits_{j=1}^q \lb \frac{n_j}{n} \rb^2 \left\{ \frac{1}{2 n_j} - \frac{1}{2 (n_j - p) } \right\} 
		+ \frac{ \tilde{\sigma}_{n,2}^2 }{n^2} +  o(1) \\
		= & \sigma_n^2 + o_{\PR}(1), ~n\to\infty,
	\end{align*}
	which implies \eqref{conv_var_2} by an application of Lemma \ref{lem_sigma_bound_eq_cov}. 
	Note that the Lindeberg condition for the scheme $(W_i / (n\sigma_n) )_{1\leq i \leq p}$ can be shown similarly to \eqref{lindeberg} using Lemma \ref{lem_sigma_bound_eq_cov}.  
	Combining \eqref{z1} and \eqref{z2}, we conclude  
	\begin{align*}
		& \frac{ 2 \lb \log \Lambda_{n,1} - \mu_n \rb }{n\sigma_n}
		 = \frac{ \sum\limits_{j=1}^q \sum\limits_{i=1}^{p} n_j X_{j,i} 
		-  n \sum\limits_{i=1}^p X_i }{ n \sigma_n} 
		+ o_{\PR}(1), ~ n\to\infty.
	\end{align*}
		By an application of Corollary 3.1 of \cite{hall_heyde}, we obtain
	\begin{align*} 
			 \frac{ \sum\limits_{j=1}^q \sum\limits_{i=1}^{p} n_j X_{j,i} 
		-  n \sum\limits_{i=1}^p X_i }{ n \sigma_n} 
		\cond \mathcal{N} (0,1), ~ n\to\infty. 
	\end{align*}
	The proof Theorem \ref{thm_eq_cov} concludes. 
	\end{proof}
\section*{Acknowledgments}
This work was supported by the German Research Foundation (DFG Research Unit 1735, DE 502/26-2, RTG 2131, \textit{High-dimensional Phenomena in Probability - Fluctuations and Discontinuity} and project number 460867398, DFG Research unit 5381, {\it Mathematical Statistics in the Information Age}). 
The author would like to thank Tim Kutta for his helpful comments on an earlier version of this manuscript. The very constructive comments of the Editor, Associate Editor and referees are kindly acknowledged.


\bibliographystyle{myjmva}
\bibliography{references}

\end{document}